\newlength{\dhatheight}
\newtheorem{thm}{Theorem}[section]
\newtheorem{lem}[thm]{Lemma}
\newtheorem{cor}[thm]{Corollary}
\newtheorem{prop}[thm]{Proposition}
\newtheorem{defn}[thm]{Definition}
\newtheorem{remark}[thm]{Remark}
\def\tr{\operatorname{tr}}
\begin{document}

\begin{frontmatter}



\title{Quantum chromatic numbers of some graphs in Hamming schemes}


\author{Xiwang Cao$^{a,}$\footnote{The research of X. Cao is supported by National Natural Science Foundation of China, Grant No. 12171241. The research of K. Feng is supported by National Natural Science Foundation of China, Grant No. 12031011. The research of Y. Tan is supported by National Natural Science Foundation of China, Grant No. 12371339}, Keqin Feng$^{b}$, Ying-Ying Tan$^c$}

\address{$^{a}$School of Mathematics, Nanjing University of Aeronautics and Astronautics, China\\ $^{b}$Department of Mathematics, Tsinghua University, China\\
$^c$Anhui Jianzhu University, China}
\begin{abstract}
The study of quantum chromatic numbers of graphs is a hot research topic in recent years. However, the infinite family of graphs with known quantum chromatic numbers are rare, as far as we know, the only known such graphs (except for complete graphs, cycles, bipartite graphs and some trivial cases) are the Hadamard graphs $H_n$ with $2^n$ vertices and $n$ a multiple of $4$. In this paper, we consider the graphs in Hamming schemes, we determined the quantum chromatic numbers of one class of such graphs. Notably, this is the second known family of graphs whose quantum chromatic numbers are explicitly determined except for some cases aforementioned. We also provide some bounds for the quantum chromatic numbers of some other graphs in Hamming schemes. Consequently, we can obtain the quantum chromatic numbers of products of some graphs.

\end{abstract}

\begin{keyword}
chromatic number \sep quantum chromatic number \sep colouring \sep quantum colouring


\MSC 05C15 \sep 05E30 \sep 94B25 \sep 97K30

\end{keyword}

\end{frontmatter}


\section{Introduction}
\label{intro}
In recent years, combinatorial designs and graph theory have become useful tools in the study of quantum communications and quantum information processing, mainly reflected in the following aspects:

\begin{itemize}
  \item Quantum states constructed from graphs and hypergraphs are used to study entanglement phenomena and construct high-performance quantum error correction codes \cite{AHKS06,cameron};
  \item Spherical designs are used to construct various types of unbiased bases for quantum measurement \cite{feng};
  \item Classical combinatorial designs are extended to quantum versions (quantum orthogonal Latin squares, etc.) to study and construct various maximally entangled quantum states\cite{CW};
  \item Quantum state transfer are employed for transmitting quantum information using quantum networks, for example, the so-called perfect state transfer, uniform mixing. etc.\cite{Ada};
   \item Extend some parameters and concepts of the classical graphs to that of the quantum version, such as quantum homomorphism, quantum chromatic numbers, quantum independent numbers, etc.\cite{{cameron}}
\end{itemize}

Let $\Gamma$ be a simple graph whose vertex set is $V$ and edge set $E$. A colouring on $\Gamma$ is an assignment of colors to vertices of the graph such that the two vertices of each edge have different colours. Graph colouring is of particular interesting since it finds applications in quantum information theory and communication as seen in \cite{AHKS06}. Classical graph colouring can be interpreted as a so-called non-local game, where two players Alice and Bob collaborate to answer pairs of questions without communication using some prior agreed-upon strategy. Quantum coloring of graphs is a modification of the classical graph coloring where the players may use ``quantum" strategies meaning a
shared entangled resource is available. In the colouring games, we are interested in the minimum number of the colours needed to win. For a classical graph $\Gamma$, this minimum number is denoted by $\chi(\Gamma)$, and termed as the chromatic number. For quantum graphs, by $\chi_q(\Gamma)$ and called the quantum chromatic number. The mathematical definition of these notations will be given in the next section. Notably, it was
shown the tremendous quantum advantage in \cite{AHKS06} that Hadamard graphs can provide examples of an exponential gap between the quantum and classical chromatic numbers using a result in \cite{PF}. However, in general, it is difficult to determine the chromatic number of a given graph, even more harder to evaluate or estimate the quantum chromatic number of the graph. In \cite{ji}, Ji proved that determining these numbers is NP hard. Up to date, except for complete graphs, cycles, bipartite graphs, Hadamard graphs $H_n$ and some trivial cases, only a few lower bounds on the chromatic numbers are known for some sporadic graphs, as far as we know, the only non-trivial family of graphs whose quantum chromatic numbers are explicitly determined is the class of Hadamard graphs which were defined by Ito \cite{Ito} in the year 1985. Using the representations of certain groups and extensive computations, Ito obtained the spectra of the Hadamaed graphs. Very recently, Menamara \cite{Mena} also calculated the quantum chromatic numbers of the Hadamard graphs of order
$N = 2^n$ for $n$ a multiple of $4$ using character sums over finite fields and the upper bound derived by Avis etal \cite{AHKS06}, as well
as an application of the Hoffman-like lower bound of Elphick and Wocjan \cite{CW} that was generalized by Ganesan \cite{Gan} for quantum graphs. One of the main results in \cite{Mena} is as follows:

\begin{thm}\cite{Mena}\label{thm-1} (Exact quantum chromatic number of Hadamard graphs). Let $H_n$ be the Hadamard graph
on $2^n$ vertices, $n$ a multiple of 4. Then,
\begin{equation}\label{f-1}
\chi_q(H_n ) = n.
\end{equation}
\end{thm}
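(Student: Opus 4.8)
The plan is to establish the two bounds $\chi_q(H_n)\le n$ and $\chi_q(H_n)\ge n$ separately. For the upper bound I would invoke the explicit quantum $n$-colouring of Avis, Hasegawa, Kikuchi and Sasaki \cite{AHKS06}: identify the vertex set of $H_n$ with $\{\pm1\}^n$, so that $x\sim y$ iff the ordinary inner product $\langle x,y\rangle$ vanishes, put $\omega=e^{2\pi\mathrm i/n}$, and to the pair (vertex $x$, colour $c\in\{0,1,\dots,n-1\}$) assign the unit vector $|u_{x,c}\rangle\in\mathbb C^n$ whose $j$-th coordinate is $\tfrac1{\sqrt n}x_j\omega^{jc}$. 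Then $\{|u_{x,c}\rangle\}_{c}$ is an orthonormal basis of $\mathbb C^n$ for each fixed $x$, while $\langle u_{x,c}\mid u_{y,c}\rangle=\tfrac1n\langle x,y\rangle$, which is $0$ exactly when $x\sim y$; hence the rank-one projectors $|u_{x,c}\rangle\langle u_{x,c}|$ form a quantum $n$-colouring and $\chi_q(H_n)\le n$. (Only $n$ even is used here; the hypothesis $4\mid n$ enters through the lower bound.)

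For the lower bound I would apply the Hoffman-type spectral bound for the quantum chromatic number of Elphick and Wocjan \cite{CW}, in the form extended to quantum graphs by Ganesan \cite{Gan}: for a regular graph $\Gamma$, $\chi_q(\Gamma)\ge 1+\dfrac{\lambda_{\max}(\Gamma)}{-\lambda_{\min}(\Gamma)}$. So it suffices to identify the extreme eigenvalues of $H_n$. Being the distance-$\tfrac n2$ graph of the binary Hamming scheme $H(n,2)$, $H_n$ is $\binom n{n/2}$-regular and has adjacency eigenvalues $\lambda_i=K_{n/2}(i)=[z^{n/2}](1-z)^i(1+z)^{n-i}$ for $i=0,1,\dots,n$, with multiplicity $\binom ni$; equivalently $\lambda_i=\sum_{\mathrm{wt}(y)=n/2}(-1)^{y_1+\cdots+y_i}$. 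Hence $\lambda_{\max}=\lambda_0=\binom n{n/2}$, and a one-line computation (using $\binom{n-2}{n/2}=\binom{n-2}{n/2-2}$ and $\binom{n-2}{n/2-2}\big/\binom{n-2}{n/2-1}=(n/2-1)/(n/2)$) gives $\lambda_2=-\tfrac1{n-1}\binom n{n/2}$.

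The heart of the matter is to show that this $\lambda_2$ is in fact the \emph{smallest} eigenvalue, i.e. $K_{n/2}(i)\ge-\tfrac1{n-1}\binom n{n/2}$ for all $i$. I would first dispose of the odd indices: for $i$ odd the sum $\sum_a(-1)^a\binom ia\binom{n-i}{n/2-a}$ cancels termwise under $a\mapsto i-a$, so $\lambda_i=0$. The even indices are where $4\mid n$ is used — already $\lambda_n=(-1)^{n/2}\binom n{n/2}$ equals $+\binom n{n/2}$, not $-\binom n{n/2}$, precisely because $n/2$ is even, so (unlike the case $n\equiv2\pmod4$, in which $H_n$ is bipartite and $\chi_q=2$) the minimum is not attained at the antipodal eigenvalue. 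For the remaining even $i$ with $4\le i\le n-2$ I would estimate the character sum $\sum_{\mathrm{wt}(y)=n/2}(-1)^{y_1+\cdots+y_i}$ using the three-term recurrence and generating-function identities for the Krawtchouk polynomials, exploiting $4\mid n$ to control the signs, and show it never drops below $-\tfrac1{n-1}\binom n{n/2}$. Granting this, $-\lambda_{\min}=\tfrac1{n-1}\binom n{n/2}$, whence $\chi_q(H_n)\ge 1+\binom n{n/2}\big/\big(\tfrac1{n-1}\binom n{n/2}\big)=1+(n-1)=n$; together with the upper bound, $\chi_q(H_n)=n$.

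I expect the main obstacle to be exactly this eigenvalue estimate: unlike $\lambda_0$ and $\lambda_2$, the values $\lambda_i$ for $4\le i\le n-4$ have no uniformly simple closed form, so one has to bound a whole family of binomial/character sums at once and confirm that $i=2$ is genuinely where the minimum sits. This is the step in which the finite-field character-sum machinery does real work, and where the hypothesis $4\mid n$ is essential rather than cosmetic.
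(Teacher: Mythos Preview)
Your overall architecture is exactly that of the paper: the upper bound via the AHKS rank-one projectors, and the lower bound via the Elphick--Wocjan spectral inequality applied to the eigenvalues $K_{n/2}(i)$ of the Cayley graph. Your treatment of the upper bound and of the odd indices $i$ is essentially identical to the paper's.

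The gap is in your handling of the even-index eigenvalues. You assert that ``the values $\lambda_i$ for $4\le i\le n-4$ have no uniformly simple closed form'' and propose to bound them by a mix of recurrences and character-sum estimates, explicitly flagging this as the main obstacle and writing ``granting this'' before concluding. In fact there \emph{is} a closed form, and the paper obtains it in one stroke using the Krawtchouk reciprocal law
\[
\binom{n}{i}\,K_{n/2}^{\,n}(i)\;=\;\binom{n}{n/2}\,K_i^{\,n}(n/2).
\]
The point is that you are reading $K_{n/2}(i)$ as the coefficient of $z^{n/2}$ in $(1-z)^i(1+z)^{n-i}$, which does not simplify; after reciprocity one instead reads $K_i(n/2)$ as the coefficient of $x^i$ in $(1-x)^{n/2}(1+x)^{n/2}=(1-x^2)^{n/2}$. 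This instantly gives $\lambda_i=0$ for $i$ odd and
\[
\lambda_{2j}\;=\;(-1)^{j}\,\frac{\binom{n}{n/2}\binom{n/2}{j}}{\binom{n}{2j}}\qquad(0\le j\le n/2).
\]
With this explicit formula the minimisation is elementary: the consecutive ratio $|\lambda_{2j}|/|\lambda_{2(j-1)}|=(2j-1)/(n-2j+1)$ shows that $|\lambda_{2j}|$ is unimodal in $j$ with its maximum at the endpoints, so among the negative eigenvalues (odd $j$) the most negative occurs at $j=1$ (equivalently $j=n/2-1$, which is odd precisely because $4\mid n$), yielding $\lambda_{\min}=-\binom{n}{n/2}/(n-1)$ and hence $\chi_q(H_n)\ge n$. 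No ``finite-field character-sum machinery'' is needed; the single missing ingredient in your proposal is the reciprocal law.
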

We note that the above result is already known, see for example \cite{CW}.  Menamara \cite{Mena} gave a new proof of it by providing an explicit quantum colouring of $H_n$.

In this paper, we give a new method for calculating the spectrum of the Hadamard graph in Theorem \ref{thm-1} by using some properties of Krawchouk polynomials, we also determined the quantum chromatic numbers of some other graphs in Hamming schemes, some bounds on the quantum chromatic numbers of some graphs in Hamming schemes are provided.

The organization of the paper is as follows: In Sect. \ref{prelim}, we give some backgrounds on quantum information and quantum measurements, as well as some basic concepts of graph theory. In Sect. \ref{main results}, we consider the graphs in Hamming schemes.  Using some spectral bounds on the quantum chromatic numbers, we successfully obtained the quantum chromatic numbers of one class of such graphs, see Theorem \ref{main-1}. Some bounds of quantum chromatic numbers of other class of graphs in Hamming shcems are provided as well (Theorem \ref{thm-3.5} and Proposition \ref{prop-3.6}). By utilizing the products of graphs, we can also get the quantum chromatic numbers of some graphs (Theorem \ref{thm-3.11}).



\section{Preliminaries}\label{prelim}

\subsection{Some basic concepts of quantum communication}\label{Some basic concepts of quantum communication}
\subsubsection{ Quantum state}

In digital communications, information is represented as an $K$-tuple $c=(c_0,c_1,\cdots,c_{K-1})$, where the entries $c_i\in Q$ which is a $q$-ary set. In most cases, $Q$ is chosen as the finite field $\mathbb{F}_q$ with $q$ elements, or a cyclic group $\{0,1,\cdots,q-1\} \pmod q$. Then $c$ can be viewed as a vector in $Q^K$.

In quantum communication, each qubit, denoted by $|v\rangle =(v_0,v_1,\cdots, v_{K-1})^T$, is a unite vector in the $K$-dimensional vector space $\mathbb{C}^K$. For every $|v\rangle =(v_0,v_1,\cdots, v_{K-1})^T$, $|u\rangle =(u_0,u_1,\cdots, u_{K-1})^T\in \mathbb{C}^K$, define the inner product
\begin{equation*}
  \langle u|v\rangle=\sum_{i=0}^{K-1}u_iv_i^*.
\end{equation*}
If $\langle u|v\rangle=0$, we say that $|u\rangle$ and $|v\rangle$ are separable. A quantum state is a vector in the space $\mathbb{C}^{d_1}\otimes \mathbb{C}^{d_2}\otimes \cdots \otimes\mathbb{C}^{d_K}$ which is the tensor product of complex spaces. Take $V_i=\mathbb{C}^{d_i}$, $1\leq i\leq K$, and choose an orthnormal basis of $V_i$ as $|0\rangle, |1\rangle, \cdots, |d_{i}-1\rangle$. Then

$$\{|e_1\rangle\otimes\cdots\otimes|e_K\rangle: 0\leq e_i\leq d_i-1, (1\leq i\leq K)\}$$
forms an orthnormal basis of $\mathfrak{V}:=\mathbb{C}^{d_1}\otimes \mathbb{C}^{d_2}\otimes \cdots \otimes\mathbb{C}^{d_K}$. Thus each quantum state in $\mathfrak{V}$ can be uniquely represented as
\begin{equation*}
  |v\rangle=\sum_{0\leq e_i\leq d_i-1,1\leq i\leq K}a_{e_1,\cdots,e_K}|e_1\rangle\otimes\cdots\otimes|e_K\rangle, a_{e_1,\cdots,e_K}\in \mathbb{C}.
\end{equation*}

\subsubsection{ Quantum measurement}

Let $H=(h_{ij})_{0\leq i,j\leq K-1}$ be a Hermite matrix. Then the quantum measurement of $H$ on $|v\rangle\in \mathbb{C}^K$ is defined by $H|v\rangle$. In quantum communication, $H$ can be written as $H=\sum_{i,j=0}^{K-1}h_{ij}|i\rangle \langle j|$.

Generally speaking, it is not easy to devise a measurement procedure which uniquely identifies the given quantum state from the statistical date produced by the measurements. For example, if the state of the quantum system is given by an $K \times K$
density matrix, the complete measurement statistics of one fixed {\it von
Neumann} measurement is not sufficient to reconstruct the state, see e.g. \cite{kla}. However, it is possible to perform a somewhat general measurement procedure on a quantum system, namely a {positive operator-valued measurement} (or POVM for short), see \cite{peres}. Mathematically, a POVM is a collection of some semi-positive
operators $E_i \geq 0$, each $E_i$ is a $K$ by $K$ matrix, called POVM elements, satisfying the summation of all these operators is equal to $I_K$ the identity matrix. POVMs constitute a basic ingredient in many applications of quantum information processing: quantum tomography, quantum key distribution required in cryptography, discrete Wigner function, quantum teleportation, quantum error correction codes, dense coding, teleportation, entanglement swapping, covariant cloning and so on, see for example \cite{NC}.

\subsubsection{Projective measurement}

In a quantum measurement, people usually use some projective matrices $P=(p_{ij})_{1\leq i,j\leq K}: \mathbb{C}^K\rightarrow \mathbb{C}^K$. A Hermite matrix $P$ is called projective if $P^2=P=P^*$. Suppose that $|v\rangle$ is contained in the image of $P$, that is, there is a vector $|a\rangle\in \mathbb{C}^K$ such that $P|a\rangle=|v\rangle$. Then
\begin{equation*}
  P|v\rangle=P^2|a\rangle=P|a\rangle=|v\rangle.
\end{equation*}
Thus $P|_{{\rm Im}(P)}={\rm id}$. Then there exists a unitary matrix $U$ such that $U^*PU={\rm diag}(I_r,0)$, where $r={\rm rank}(P)$.

Finally, a set of projective operators $\{P_1,P_2,\cdots, P_K\}$ in $\mathbb{C}^{K\times K}$ is called a complete POVM if $P_iP_j=0_K$ for every $1\leq i\neq j\leq K$, and $\sum_{i=1}^KP_i=I_K$. In this case, it can be proved that there exists a unitary matrix $U$ such that
\begin{equation*}
  U^*P_iU={\rm diag}(0,0,\cdots,1,0,\cdots,0), 1\leq i\leq K,
\end{equation*}
where $1$ is in the $i$-th entry. Moreover, $\mathbb{C}^K={\rm Im}(P_1)\oplus{\rm Im}(P_2)\oplus \cdots \oplus {\rm Im}(P_K).$

\subsection{ Quantum homomorphism of graphs and graph colouring}\label{graph theory}


Let $\Gamma=(V,E)$ be a simple graph with $n=|V|$ vertices and $m=|E|$ edges. A homomorphism $\varphi$ from a graph $\Gamma_1=(V_1,E_1)$ to a graph $\Gamma_2=(V_2,E_2)$ is a mapping $\varphi: \Gamma_1\rightarrow \Gamma_2$ satisfying $(\varphi(u),\varphi(v))\in E_2$ if $(u,v)\in E_1$. For example, if $\Gamma_2=K_c$ is a complete graph on $c$ vertices, then $\varphi: \Gamma=(V, E)\rightarrow K_c$ is a homomorphism means that if $(u,v)\in E$, then $\varphi(u)\neq \varphi(v)$. We name the minimum number $c$ such that there exists a homomorphism from $\Gamma$ to $K_c$ the chromatic number of $\Gamma$ and denote it by $\chi(\Gamma)$. The maximum number $c$ such that there is a homomorphism from $K_c$ to $\Gamma$ is called the clique number of $\Gamma$ and denoted by $\omega(\Gamma)$. Let $\bar{\Gamma}$ be the complement graph of $\Gamma$. Then $\alpha(\Gamma):=\omega(\bar{\Gamma})$ is called the independent number of $\Gamma$.

\begin{defn}A quantum homomorphism from a graph $\Gamma_1=(V_1,E_1)$ to a graph $\Gamma_2=(V_2,E_2)$ means that there is a positive integer $d$ such that for every $x\in V_1$, there exists a complete orthogonal projective system $\mathfrak{F}_x=\{P_{x,y}: y\in V_2\}$ satisfying the following two conditions:

\begin{enumerate}
  \item (Completeness) For every $x\in V_1$, $\mathfrak{F}_x$ is a complete orthogonal system, namely, $P_{x,y}^2=P_{x,y}=P_{x,y}^*$ and, when $y\neq y'$, we have $P_{x,y}P_{x,y'}=0_d$. Moreover, $\sum_{y\in V_2}P_{x,y}=I_d$.
  \item (Orthogonality) For every $x,x'\in V_1$, $y,y'\in V_2$, we have $P_{x,y}P_{x',y'}=0_d$ whence $(x,x')\in E_1, (y,y')\not\in E_2$.
\end{enumerate}
 \end{defn}
It is easy to see that a classical graph homomorphism is actually a quantum homomorphism. We note that, in a recent paper \cite{Ada}, Chan et al gave a definition of quantum isomorphism of graphs and proved that any two Hadamard graphs on the same number of vertices are quantum isomorphic.

\begin{defn}The quantum chromatic number of a graph $\Gamma$, denoted by $\chi_q(\Gamma)$, is the minimum number $c$ such that there exists a quantum homomorphism from $\Gamma$ to the complete graph $K_c$.\end{defn}

By definition, we see that for every graph $\Gamma$,
\begin{equation}\label{f-2}
 \chi_q(\Gamma)\leq \chi(\Gamma).
\end{equation}

It seems that how to determine the quantum chromatic number of a given graph is very hard. Up to date, except for some sporadic graphs with small size of vertices and some trivial cases, the only known class of graphs whose quantum chromatic numbers are determined are the Hadamard graphs $H_n$ with $n$ a multiple of $4$. This situation motivates the study of quantum chromatic numbers of graphs. The following questions are of particular interesting.

\begin{itemize}
  \item For a specific class of graphs, determine their chromatic numbers;
  \item Separable problem: find graphs such that their chromatic numbers are strictly less than their quantum chromatic numbers, note that $\chi_q(H_n)=n$ when $4|n$, however, $\chi(H_n)\geq 2^{n/2}$ when $n$ is bigger enough;
  \item Find some lower or upper bounds for the chromatic numbers of some class of graphs.
\end{itemize}

For more information about quantum chromatic numbers, we refer the reader to \cite{ cameron,CW, feng}.

\subsection{Spectra of Cayley graphs and bounds on quantum chromatic numbers}\label{Spectrum of Cayley graphs}

Let $\Gamma=(V,E)$ be a simple graph with $|V|=n, |E|=m$, $A$ be its adjacency matrix. The spectrum of $A$ is also termed the spectrum of $\Gamma$. For every $x\in V$, the number of its neighborhoods is defined as its valency (or degree). If we label the vertices of $\Gamma$ as $x_1,\cdots,x_n$, and denote the valency of $x_i$ by $k_i$. Then $D:={\rm diag}(k_1,\cdots,k_n)$ is the degree matrix of $\Gamma$. $L=D-A$ (resp. $L^+=D+A$) is the Laplace (resp. signless Laplace) matrix of $\Gamma$. Suppose that the eigenvalues of $A$, $L$ and $L^+$ are $\lambda_1\geq \lambda_2\geq \cdots\geq \lambda_n$, $\theta_1\geq \theta_2\geq \cdots \geq \theta_n(= 0)$, and $\delta_1\geq \delta_2\geq \cdots \delta_n$, respectively. The following result is known, see for example, \cite{CW} and the references therein.

\begin{thm}\label{thm-2.3} Let notations be defined as above. Then
\begin{equation}\label{f-3}
  \chi(\Gamma)\geq 1+\max\left\{\frac{\lambda_1}{|\lambda_n|}, \frac{2m}{2m-n\delta_n},\frac{\lambda_1}{\lambda_1-\delta_1+\theta_1},\frac{n^+}{n^-},\frac{n^-}{n^+},\frac{S^+}{S^-},\frac{S^-}{S^+}\right\},
\end{equation}
where $n^+$ (resp. $n^-$) is the number of positive (resp. negative) eigenvalues of $\Gamma$, and $S^+$ (resp. $S^-$) is the summation of the squares of positive (resp. negative) eigenvalues of $\Gamma$.
\end{thm}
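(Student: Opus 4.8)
The plan is to prove each of the seven lower bounds separately, since they come from rather different arguments; the final bound is simply the maximum of the individual ones. Throughout, fix a proper colouring of $\Gamma$ with $\chi := \chi(\Gamma)$ colours, and let $V = V_1 \cup \cdots \cup V_\chi$ be the induced partition of the vertex set into independent (colour) classes. The common thread for all the inequalities will be eigenvalue interlacing (the Cauchy interlacing theorem) applied to the quotient matrix obtained by averaging $A$, $L$ or $L^+$ over the blocks $V_i$: since each $V_i$ is independent, the diagonal blocks of $A$ vanish, and one can estimate the eigenvalues of the $\chi \times \chi$ quotient matrix against those of the full matrix.

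First I would handle the classical Hoffman bound $\chi \geq 1 + \lambda_1 / |\lambda_n|$: the quotient matrix $B$ of $A$ with respect to the colour partition is a $\chi \times \chi$ symmetric matrix with zero diagonal, hence $\Tr B = 0$, while interlacing gives $\lambda_1(B) \ge (\lambda_1 + (\chi-1)\lambda_n)/\text{(something)}$; more directly, the standard argument uses that $\lambda_1$ is an eigenvalue of $B$ (the all-ones-type Perron vector is block-constant when $\Gamma$ is regular, and in general one passes to an equitable-type estimate) and $\lambda_n \le \lambda_i(B)$ for all $i$, so $0 = \Tr B \ge \lambda_1 + (\chi - 1)\lambda_n$, which rearranges to the claim. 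The three Laplacian-type bounds $2m/(2m - n\delta_n)$, $\lambda_1/(\lambda_1 - \delta_1 + \theta_1)$ are proved by the same device applied to $L^+ = D + A$ and $L = D - A$: the diagonal blocks of $L^+$ restricted to $V_i$ have trace equal to $\sum_{x \in V_i} k_x$, the off-diagonal contribution vanishes on each block, and summing the interlacing inequalities over the $\chi$ blocks together with $\sum_x k_x = 2m$ and the extremal characterisations of $\delta_1, \delta_n, \theta_1$ yields the stated fractions after rearrangement.

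Next I would treat the inertia bounds $\chi \ge 1 + \max\{n^+/n^-, n^-/n^+\}$: here the key point is that $A$ has a block structure with $\chi$ zero diagonal blocks, so if one deletes one vertex from each class one removes $\chi$ rows/columns and the remaining principal submatrix has a nonzero pattern; the cleaner route is the result that a graph with a proper $\chi$-colouring has at most $(\chi-1)$ times as many positive (resp.\ negative) eigenvalues as negative (resp.\ positive) ones, which follows from writing $A$ in block form and using that a symmetric matrix all of whose diagonal blocks are zero, with $\chi$ blocks, has $n^+ \le (\chi - 1)\min\{n^+, n^-\} + \ldots$; I would prove this via Haemers-type interlacing or via the observation that $A$ restricted to the span of any colour class is the zero form, so each colour class contributes an isotropic subspace of dimension $|V_i|$, forcing $\min\{n^+, n^-\} \ge n - \max_i(n - |V_i|) \ge \ldots$, and counting carefully gives $n^+ \le (\chi-1) n^-$. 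The weighted versions with $S^\pm = \sum_{\lambda_i \gtrless 0} \lambda_i^2$ follow the same isotropic-subspace idea applied to $A^2$ or, more precisely, by splitting $\Tr A^2 = 2m$ according to sign and using that within each colour class the contribution to $\sum \lambda_i^2$ from positive and negative parts is balanced.

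The main obstacle will be organising the inertia and $S^\pm$ bounds (the last four) cleanly: unlike the Hoffman-type bounds, these do not follow from a one-line application of interlacing to the quotient matrix, and the cited sources (e.g.\ \cite{CW} and references therein) prove them via a somewhat delicate sign-counting argument for block matrices with vanishing diagonal blocks. Since all seven inequalities are explicitly attributed to the literature, the honest approach here is to reduce each to its known source: cite Elphick--Wocjan and the references therein for the inertial and $S^\pm$ bounds, cite Hoffman for $\lambda_1/|\lambda_n|$, and cite the relevant Laplacian-spectral colouring papers for the two $L$ and $L^+$ bounds, giving a short unified proof sketch via the colour-class quotient matrix and flagging that the inertia bounds require the additional block-isotropy observation. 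I do not expect any genuinely new difficulty beyond bookkeeping, since Theorem \ref{thm-2.3} is stated as a known collation of classical spectral lower bounds for $\chi(\Gamma)$.
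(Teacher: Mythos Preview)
The paper does not actually prove Theorem~\ref{thm-2.3}: it is stated as a known result with the remark ``The following result is known, see for example, \cite{CW} and the references therein,'' and no argument is given. So there is no ``paper's own proof'' against which to compare your proposal.

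Your instinct at the end of the proposal---that the honest approach is to cite the original sources for each of the seven bounds and give only a brief unifying sketch---is exactly what the paper itself does (in fact the paper goes further and gives no sketch at all). Your outlines of the individual arguments are broadly correct in spirit: the Hoffman bound via the zero-diagonal quotient matrix over colour classes, the Laplacian-type bounds via the same partition applied to $L$ and $L^+$, and the inertia and $S^\pm$ bounds via the observation that each colour class gives an isotropic subspace for the adjacency form. The sketches are hand-wavy in places (for instance, the trace argument for Hoffman as you wrote it needs $\lambda_1$ to be an eigenvalue of the quotient, which in the non-regular case requires the more careful Haemers interlacing with an equitable-type partition, and your inertia paragraph trails off mid-inequality), but since the paper treats the whole theorem as a black-box citation, there is nothing further to reconcile.
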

Some quantum versions of Theorem \ref{thm-2.3} are known, for example, a spectral lower bound on the quantum chromatic numbers is provided in \cite{CW}.
\begin{lem}\cite{CW}\label{lem-2.4} For any graph $\Gamma$ with eigenvalues $\lambda_1\geq \lambda_2\geq \cdots \geq \lambda_n$, we have
\begin{equation}\label{f-4'}
  \chi_q(\Gamma)\geq 1+\frac{\lambda_1}{|\lambda_n|}.
\end{equation}
\end{lem}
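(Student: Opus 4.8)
The plan is to transport the classical Hoffman bound to the quantum setting, following the unitary-conjugation technique behind the Elphick--Wocjan inequality. Assume $c=\chi_q(\Gamma)$, so there is an integer $d$ and, for each vertex $x$, a complete orthogonal projective system $\{P_{x,i}:1\le i\le c\}$ on $\mathbb{C}^d$ realizing a quantum homomorphism $\Gamma\to K_c$. First I would package these blockwise: for each colour $i$ let $B_i=\bigoplus_{x\in V}P_{x,i}$, an $nd\times nd$ matrix. Completeness gives $\sum_{i=1}^{c}B_i=I_{nd}$, each $B_i$ is an orthogonal projection, and $B_iB_j=0_{nd}$ for $i\ne j$. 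Let also $\mathcal{A}=A\otimes I_d$; its eigenvalues are those of $A$ with every multiplicity multiplied by $d$, so its extreme eigenvalues are again $\lambda_1$ and $\lambda_n$, and we may assume $\Gamma$ has an edge, so that $\lambda_n<0$.

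The decisive observation is that $\sum_{i=1}^{c}B_i\mathcal{A}B_i=0_{nd}$. Indeed, the $(x,x')$ block of $B_i\mathcal{A}B_i$ equals $A_{x,x'}P_{x,i}P_{x',i}$; this is the zero matrix when $x=x'$ or $x\not\sim x'$ because then $A_{x,x'}=0$, and it is zero when $x\sim x'$ because the orthogonality axiom, applied to the edge $(x,x')\in E(\Gamma)$ and the non-edge $(i,i)\notin E(K_c)$ (note $K_c$ is loopless), forces $P_{x,i}P_{x',i}=0_d$. Now let $\omega$ be a primitive $c$-th root of unity and set $U_t=\sum_{i=1}^{c}\omega^{ti}B_i$ for $0\le t\le c-1$. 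Each $U_t$ is unitary since the $B_i$ form a complete family of mutually orthogonal projections, and $U_0=I_{nd}$. Using orthogonality of characters, $\sum_{t=0}^{c-1}U_t\mathcal{A}U_t^{*}=c\sum_{i=1}^{c}B_i\mathcal{A}B_i=0_{nd}$, whence $\mathcal{A}=-\sum_{t=1}^{c-1}U_t\mathcal{A}U_t^{*}$. Conjugation by a unitary preserves the Loewner order, so $U_t\mathcal{A}U_t^{*}\succeq\lambda_n I_{nd}$ for every $t$. Taking a unit eigenvector $v$ of $\mathcal{A}$ for $\lambda_1$ and evaluating the last identity in its quadratic form gives $\lambda_1=-\sum_{t=1}^{c-1}\langle U_t\mathcal{A}U_t^{*}v,v\rangle\le-(c-1)\lambda_n=(c-1)|\lambda_n|$, so $\chi_q(\Gamma)=c\ge 1+\lambda_1/|\lambda_n|$.

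The routine ingredients here --- unitarity of $U_t$, the character-sum collapse, and the bookkeeping of block matrices --- present no difficulty. The one point requiring care, and the main obstacle, is the vanishing $\sum_i B_i\mathcal{A}B_i=0$: it hinges on invoking the orthogonality condition of a quantum homomorphism at the diagonal colour pair $(i,i)$, which is legitimate precisely because the complete graph $K_c$ has no loops. Everything else is a faithful transcription of the classical Hoffman argument with scalars replaced by commuting projection blocks.
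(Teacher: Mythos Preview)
Your argument is correct and is precisely the Elphick--Wocjan pinching/unitary-conjugation proof from \cite{CW}; the paper itself does not supply a proof of Lemma~\ref{lem-2.4} but merely quotes the result from that reference, so there is nothing further to compare.
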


Let $G$ be a finite group. A { representation} of $G$ is a homomorphism $\rho: G \rightarrow GL(U)$ for a (finite-dimensional) non-zero vector space $U$. The dimension of $U$ is called the { degree} of $\rho$. Two representations $\rho: G\rightarrow GL(U)$ and $\varrho: G\rightarrow GL(W)$ are {\it equivalent}, denoted by $\rho\sim \varrho$, if there exists an isomorphism $T: U\rightarrow W$ such that $\rho_g=T^{-1}\varrho_g T$ for all $g\in G$. For every representation $\rho: G\rightarrow GL(U)$ of $G$, the { character} of $\chi_\rho$ is defined by:
\begin{equation*}
  \chi_\rho: G\rightarrow \mathbb{C}, \chi_\rho(g)=\tr(\rho(g))  \mbox{ for all $g\in G$},
\end{equation*}
where $\tr(\rho(g))$ is the trace of the representation matrix with respect to a basis of $U$.

A subspace $W$ of $U$ is said to be $G$-{invariant} if $\rho(g)\omega\in W$ for every $g\in G$ and $\omega\in W$. Obviously, $\{0\}$ and $U$ are $G$-invariant subspaces, called trivial subspaces. If $U$ has no non-trivial $G$-invariant subspaces, then $\rho$ is called an {irreducible representation} and $\chi_\rho$ an {irreducible character} of $G$.

Let $S$ be a subset of $G$ with $S^{-1}:=\{s^{-1}: s\in S\}=S$. A Cayley graph over $G$ with connection set $S$ is defined by $\Gamma:={\rm Cay}(G,S)$ where the vertex set is $G$ and two elements $x,y\in G$ are adjacent if and only if $xy^{-1}\in S$. If $S$ is conjugation closed meaning that for every $x\in G$ and $s\in S$, we have $x^{-1}sx\in S$. In this case, the Cayley graph ${\rm Cay}(G,S)$ is called normal. For normal Cayley graphs, the following result is well-known.

\begin{lem}\label{lem-2.3}\cite[pp. 69-70]{stein} Let $G=\{g_1,\cdots,g_n\}$ be a finite group of order $n$ and let $\rho^{(1)},\cdots,\rho^{(s)}$ be a complete set of unitary representatives of the equivalent classes of irreducible representations of $G$. Let $\chi_i$ be the character of $\rho^{(i)}$ and $d_i$ be the degree of $\chi_i$. Let $S$ be a symmetric set and further that $gSg^{-1}=S$ for all $g\in G$. Then the eigenvalues of the adjacency matrix $A$ of the Cayley graph ${\rm Cay}(G,S)$ with respect to $S$ are given by
\begin{equation*}
  \lambda_k=\frac{1}{d_k}\sum_{g\in S}\chi_k(g), 1\leq k\leq s,
\end{equation*}
each $\lambda_k$ has multiplicity $d_k^2$. Moreover, the vectors
\begin{equation*}
 v_{ij}^{(k)}=\frac{\sqrt{d_k}}{|G|}\left(\rho_{ij}^{(k)}(g_1),\cdots,\rho_{ij}^{(k)}(g_n)\right)^T, 1\leq i,j\leq d_k
\end{equation*}
form an orthonormal basis for the eigenspace $V_{\lambda_k}$.
\end{lem}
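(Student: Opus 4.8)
\emph{Proof idea.} The plan is to realize the adjacency matrix $A$ of $\mathrm{Cay}(G,S)$ as an element of the group algebra acting in the regular representation, and then to diagonalize it by Schur/Wedderburn theory. Identify $\mathbb{C}^n$ with $\mathbb{C}[G]$ via the basis $g_1,\dots,g_n$, so that a column vector $(f(g_1),\dots,f(g_n))^T$ corresponds to $f=\sum_g f(g)\,g$, and set $\sigma=\sum_{s\in S}s\in\mathbb{C}[G]$. A one-line computation with the entries $A_{g,h}=[\,gh^{-1}\in S\,]$ shows that $(Af)(h)=\sum_{s\in S}f(s^{-1}h)$; that is, $A$ is exactly left multiplication by $\sigma$, so $A=\rho_{\mathrm{reg}}(\sigma)$. (Since $S=S^{-1}$, $A$ is real symmetric, hence orthogonally diagonalizable.) The hypothesis $gSg^{-1}=S$ says precisely that $\sigma$ is conjugation-invariant, i.e. $\sigma\in Z(\mathbb{C}[G])$; this is the single place where conjugation-invariance is used, and it is what produces the clean eigenvalue formula.

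First I would invoke Schur's lemma. For each irreducible $\rho^{(k)}$ of degree $d_k$, the matrix $\rho^{(k)}(\sigma)=\sum_{s\in S}\rho^{(k)}(s)$ commutes with every $\rho^{(k)}(g)$ because $\sigma$ is central, hence $\rho^{(k)}(\sigma)=\lambda_k I_{d_k}$ for a scalar $\lambda_k$. Taking traces gives $d_k\lambda_k=\sum_{s\in S}\chi_k(s)$, i.e. $\lambda_k=\tfrac1{d_k}\sum_{g\in S}\chi_k(g)$. Combining this with the decomposition $\rho_{\mathrm{reg}}\cong\bigoplus_{k=1}^{s}d_k\,\rho^{(k)}$ of the regular representation, the operator $A=\rho_{\mathrm{reg}}(\sigma)$ acts as $\lambda_k I_{d_k}$ on each of the $d_k$ copies of $\rho^{(k)}$, so $\lambda_k$ is an eigenvalue of $A$ of multiplicity $d_k\cdot d_k=d_k^2$; since $\sum_k d_k^2=|G|=n$, this exhausts the spectrum.

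Next I would write down the eigenvectors explicitly in order to identify the eigenspaces. Fix $k$ and indices $1\le i,j\le d_k$ and consider the matrix-coefficient function $g\mapsto\rho^{(k)}_{ij}(g)$. Applying $A$ and expanding $\rho^{(k)}(s^{-1}g)=\rho^{(k)}(s^{-1})\rho^{(k)}(g)$ entrywise gives $(A\rho^{(k)}_{ij})(g)=\sum_{\ell}\bigl(\sum_{s\in S}\rho^{(k)}_{i\ell}(s^{-1})\bigr)\rho^{(k)}_{\ell j}(g)$; but $\sum_{s\in S}\rho^{(k)}(s^{-1})=\sum_{s\in S}\rho^{(k)}(s)=\lambda_k I_{d_k}$ since $S=S^{-1}$, so the inner factor is $\lambda_k\delta_{i\ell}$ and the whole expression collapses to $\lambda_k\rho^{(k)}_{ij}(g)$. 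Hence $A\,v^{(k)}_{ij}=\lambda_k v^{(k)}_{ij}$. The Schur orthogonality relations for the unitary irreducibles $\rho^{(k)}$ then show that the vectors $v^{(k)}_{ij}$ are mutually orthogonal and, with the indicated normalizing constant, form an orthonormal system; since there are $\sum_k d_k^2=n$ of them, they give an orthonormal basis of $\mathbb{C}^n$, and those with a fixed $k$ form an orthonormal basis of $V_{\lambda_k}$.

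Everything above is standard representation theory, so I do not expect a genuine obstacle; the points that require care are purely bookkeeping: getting the left/right conventions right so that $A$ is $\rho_{\mathrm{reg}}(\sigma)$ rather than its transpose (which determines the precise shape of $v^{(k)}_{ij}$), using $gSg^{-1}=S$ at exactly the right moment to place $\sigma$ in $Z(\mathbb{C}[G])$, and matching the normalization of $v^{(k)}_{ij}$ to whichever form of the Schur orthogonality relations is adopted.
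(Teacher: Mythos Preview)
The paper does not actually prove this lemma: it is quoted from Steinberg's book \cite[pp.~69--70]{stein}, with a remark that an alternative proof appears in \cite[Theorem~9]{murty}. So there is no ``paper's own proof'' to compare against.

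Your argument is correct and is precisely the standard one found in those references: identify $A$ with left multiplication by $\sigma=\sum_{s\in S}s$ in the regular representation, use conjugation-invariance of $S$ to place $\sigma$ in $Z(\mathbb{C}[G])$, apply Schur's lemma in each irreducible to read off $\lambda_k=\tfrac{1}{d_k}\sum_{s\in S}\chi_k(s)$, and then exhibit the matrix-coefficient vectors as eigenvectors with orthonormality coming from the Schur relations. The only point worth flagging is the normalizing constant: the Schur orthogonality relations give $\sum_{g\in G}|\rho^{(k)}_{ij}(g)|^2=|G|/d_k$, so an \emph{orthonormal} basis requires the factor $\sqrt{d_k/|G|}$ rather than $\sqrt{d_k}/|G|$ as printed in the statement. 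This is a typo in the lemma (or in the source being quoted), not a defect in your reasoning; your closing remark about matching the normalization to the chosen form of Schur orthogonality already anticipates exactly this check.
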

Note that a proof of Lemma \ref{lem-2.3} can also be found in \cite[Theorem 9]{murty}.

As a consequence, if $G$ is a finite abelian group, we assume that $G$ is decomposed as a direct sum of cyclic groups, $G=\mathbb{Z}_{n_1}\oplus \cdots \oplus \mathbb{Z}_{n_r}$, then the spectrum of the Cayley graph $\Gamma={\rm Cay}(G,S)$ is given by
\begin{equation}\label{f-4}
  \lambda_g=\sum_{s\in S}\chi_g(s),
\end{equation}
where $\chi_g(s)=\prod_{j=1}^s\xi_{n_j}^{g_js_j}$, $\forall g=(g_1,\cdots,g_r)\in G$, $s=(s_1,\cdots,s_r)\in S$, and $\xi_{n_j}$ is a primitive $n_j$-th root of unity in $\mathbb{C}$. Of course, (\ref{f-4}) can also be proved by using an elementary method.

\subsection{Krawtchouk polynomials}

For positive integers $n,\ell$, and $q$, the Krawchouk polynomial in variable $x$ is defined by
\begin{equation}\label{f-5}
  K_\ell^{n,q}(x)=\sum_{j=0}^\ell(-1)^j(q-1)^{\ell-j}\tbinom{x}{j}\tbinom{n-x}{\ell-j}.
\end{equation}
Krawchouk polynomials are a kind of orthogonal polynomials and have many important applications in many fields such as coding theory, function analysis and approximation etc. For our purpose, we list some of the properties of such polynomials as follows.
\begin{thm}\cite{Lev}\label{Krawchouk} The Krawchouk polynomials have the following properties.
\begin{enumerate}
  \item (Orthogonality Relations): For every $i,j, (i,j=0,1,\cdots,n)$
  \begin{equation}\label{f-6}
    \sum_{d=0}^nK_i^n(d)K_j^n(d)(q-1)^d\tbinom{n}{d}=q^n(q-1)^i\tbinom{n}{i}\delta_{i,j}.
  \end{equation}
  \item (Recursive Relation): For any $k = 1,\cdots, n$ and any real $x$
  \begin{eqnarray}
    K_k^n(x)&=& K_k^{n-1}(x-1)-K_{k-1}^{n-1}(x-1) \\
    K_k^n(x) &=&  K_k^{n-1}(x)+(q-1)K_{k-1}^{n-1}(x)\\
    K_k^{n-1}(x)&=&\sum_{j=0}^kK_j^n(x)(1-q)^{k-j}.
  \end{eqnarray}
  \item (Reciprocal Law):
  \begin{equation}\label{f-14}
    (q-1)^i\tbinom{n}{i}K_d^n(i)=(q-1)^d\tbinom{n}{d}K_i^n(d).
  \end{equation}
  \item (Generating Function):
  \begin{equation}\label{f-15}
    \sum_{k=0}^{n}K_k^n(d)z^k=(1-z)^{d}(1+(q-1)z)^{n-d}.
  \end{equation}
  \item (Inversion Formula):
  \begin{equation}\label{f-16}
    f(x)=\sum_{j=0}^nf_jK_j^n(x)
  \end{equation}
  if and only if for every $i=0,1,\cdots,n$,
  \begin{equation}\label{f-17}
    f_i=q^{-n}\sum_{j=0}^nf(j)K_j^n(i).
  \end{equation}
\end{enumerate}
\end{thm}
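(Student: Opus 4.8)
The plan is to deduce all five items from the generating function identity \eqref{f-15}, which is itself immediate from the definition \eqref{f-5}. First I would expand $(1-z)^d=\sum_{i\ge 0}\binom{d}{i}(-z)^i$ and $(1+(q-1)z)^{n-d}=\sum_{m\ge 0}\binom{n-d}{m}(q-1)^m z^m$ and read off the coefficient of $z^k$ in the product: it is $\sum_{i=0}^k(-1)^i(q-1)^{k-i}\binom{d}{i}\binom{n-d}{k-i}=K_k^n(d)$, which proves \eqref{f-15} for integer $d$. Since both sides are formal power series in $z$ whose coefficients are polynomials in the parameter, the identity persists with $d$ replaced by a real variable $x$; this remark is exactly what licenses the recursions ``for any real $x$.'' Writing $F_n(x;z):=\sum_k K_k^n(x)z^k=(1-z)^x(1+(q-1)z)^{n-x}$, the three recursions in item 2 follow from the trivial factorizations $F_n(x;z)=(1-z)F_{n-1}(x-1;z)$, $F_n(x;z)=(1+(q-1)z)F_{n-1}(x;z)$, and $F_{n-1}(x;z)=(1+(q-1)z)^{-1}F_n(x;z)$: expanding $(1-z)$, $(1+(q-1)z)$, and $(1+(q-1)z)^{-1}=\sum_{m\ge 0}(1-q)^m z^m$ and comparing coefficients of $z^k$ gives the stated formulas.

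For the orthogonality relations (item 1) I would use the bilinear generating function. Forming the two-variable sum against the weight $\binom{n}{d}(q-1)^d$ and substituting \eqref{f-15} twice,
\[
\sum_{d=0}^n\binom{n}{d}(q-1)^d F_n(d;y)F_n(d;z)=\sum_{d=0}^n\binom{n}{d}\bigl[(q-1)(1-y)(1-z)\bigr]^d\bigl[(1+(q-1)y)(1+(q-1)z)\bigr]^{n-d},
\]
which by the binomial theorem equals $\bigl[(q-1)(1-y)(1-z)+(1+(q-1)y)(1+(q-1)z)\bigr]^n$. The crux is the algebraic simplification $(q-1)(1-y)(1-z)+(1+(q-1)y)(1+(q-1)z)=q\bigl(1+(q-1)yz\bigr)$, so the whole expression collapses to $q^n(1+(q-1)yz)^n=\sum_i q^n(q-1)^i\binom{n}{i}(yz)^i$; comparing the coefficient of $y^i z^j$ yields \eqref{f-6}.

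The reciprocal law (item 3) I would get straight from \eqref{f-5} via the ``trinomial revision'' identity $\binom{n}{d}\binom{d}{j}\binom{n-d}{i-j}=\binom{n}{i}\binom{i}{j}\binom{n-i}{d-j}$ (both sides count the ways to split $[n]$ into blocks of sizes $j$, $d-j$, $i-j$, $n-d-i+j$): substituting it into $(q-1)^d\binom{n}{d}K_i^n(d)$ renders the expression manifestly symmetric under $i\leftrightarrow d$, which is \eqref{f-14}. The inversion formula (item 5) is then a formal consequence of items 1 and 3: assuming $f(x)=\sum_\ell f_\ell K_\ell^n(x)$, I would substitute into $q^{-n}\sum_{j=0}^n f(j)K_j^n(i)$, interchange the sums, use \eqref{f-14} to rewrite $K_j^n(i)=\dfrac{(q-1)^j\binom{n}{j}}{(q-1)^i\binom{n}{i}}K_i^n(j)$, and apply \eqref{f-6} to collapse $\sum_j(q-1)^j\binom{n}{j}K_\ell^n(j)K_i^n(j)=q^n(q-1)^i\binom{n}{i}\delta_{\ell,i}$, leaving exactly $f_i$; the reverse implication is the same computation run backwards, using that $K_0^n,\dots,K_n^n$ is a basis of the space of polynomials of degree at most $n$, so the transition between the two expansions is invertible.

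None of these steps is deep; the one that demands the most care is the orthogonality computation — verifying the collapse $(q-1)(1-y)(1-z)+(1+(q-1)y)(1+(q-1)z)=q(1+(q-1)yz)$ and bookkeeping the weights $\binom{n}{d}(q-1)^d$ correctly through the binomial theorem — together with the minor but genuine subtlety that the recursions in item 2 hold for real $x$ only because \eqref{f-15} is an identity of power series with polynomial coefficients, not merely a numerical identity at integer arguments.
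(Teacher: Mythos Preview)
The paper does not prove this theorem at all: it is stated with a citation to \cite{Lev} and used as a black box, so there is no ``paper's own proof'' to compare against. Your proposal is a correct and self-contained derivation: the generating function \eqref{f-15} follows directly from \eqref{f-5}, the three recursions drop out of the factorizations $F_n(x;z)=(1-z)F_{n-1}(x-1;z)$, $F_n(x;z)=(1+(q-1)z)F_{n-1}(x;z)$, and $F_{n-1}(x;z)=(1+(q-1)z)^{-1}F_n(x;z)$, your bilinear generating-function computation for orthogonality (with the key collapse $(q-1)(1-y)(1-z)+(1+(q-1)y)(1+(q-1)z)=q(1+(q-1)yz)$) is the standard slick argument, the trinomial revision gives the reciprocal law, and the inversion formula then follows formally from items 1 and 3 as you describe. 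This is exactly the kind of elementary proof one finds in the classical literature on Krawtchouk polynomials, and it goes beyond what the paper itself supplies.
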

\subsection{Hamming schemes}

Let $q\geq 2, n\geq 1$ be integers, $Q$ be a set of $q$ elements. $Q^n=\{(x_1,x_2,\cdots, x_n): x_i\in Q\}$. For $x=(x_1,x_2,\cdots, x_n), y=(y_1,y_2,\cdots,y_n)\in Q^n$, the Hamming distance of $x,y$, denoted by $d(x,y)$, is the number of coordinates they differ. For every $1\leq \ell\leq n$, the graph $H(n,q,\ell)$ is defined as $H(n,q,\ell)=(V,E)$, where the vertex set $V=Q^n$, two vectors $x,y$ are adjacent if $d(x,y)=\ell$. Let $A_\ell$ be the adjacency matrix of $H(n,q,\ell)$. Then $\{A_\ell: 0\leq \ell\leq n\}$, where $A_0=I_n$, forms an association scheme, named the Hamming scheme. When $q$ is fixed, we write $H(n,q,\ell)$ simply as $H_{n,\ell}$. In this paper, we call $H_{n,\ell}$ a Hamming graph for each $\ell$. The eigenvalues of $A_\ell, 0\leq \ell\leq n$ are well-known. In fact, $H_{n,\ell}$ is a Cayley graph. Let $Q=\{0,1,2,\cdots,q-1\} \pmod q$ be a cyclic group of order $q$, $S=\{x\in Q^n: wt(x)=\ell\}$, where $wt(x)=d(x,0_n)$. Then $H_{n,\ell}={\rm Cay}(Q,S)$. Thus for every $a\in Q^n$, the corresponding eigenvalue is
$\lambda_a=\sum_{x\in S}\xi_q^{a\cdot x}$, where $a\cdot x$ is the inner product of $x$ with $a$, namely, $(x_1,\cdots,x_n)\cdot (a_1,\cdots,a_n)=\sum_{i=1}^nx_ia_i$, and $\xi_q=e^{\frac{2\pi \sqrt{-1}}{q}}$ is a primitive $q$-th root of unity. Write $a=(a_0,\cdots,a_{n-1})$ and $wt(a)=r$. Then
\begin{equation*}
  \lambda_a=\sum_{x=(x_0,\cdots,x_{n-1})\in Q^n, wt(x)=\ell}\xi_q^{\sum_{i=0}^{n-1}x_ia_i}.
\end{equation*}
Since
\begin{equation*}
  \sum_{0\neq x_i\in Q}\xi_q^{x_ia_i}=\left\{\begin{array}{ll}
                                         q-1, & \mbox{ if $a_i=0$}, \\
                                         -1, & \mbox{ if $a_i\neq 0$ },
                                       \end{array}
  \right.
\end{equation*}
we know that
\begin{equation}\label{f-n1}
  \lambda_a=\sum_{j=0}^\ell(-1)^j(q-1)^{\ell-j}\tbinom{r}{j}\tbinom{n-r}{\ell-j}=K_\ell(r).
\end{equation}
Even though we have the above formula for computing the eigenvalues of $H_{n,\ell}$, it is not an explicit expression. In this paper, we will give some concise formulae for eigenvalues of Hamming graphs.

\section{Main results}\label{main results}

Let $V_n=\{(x_0,x_1,\cdots,x_{n-1}): x_i\in \mathbb{F}_2\}$, where $\mathbb{F}_2$ is the binary field. $V_n$ is a $n$-dimensional vector space over $\mathbb{F}_2$. For $x=(x_0,x_1,\cdots,x_{n-1})\in V_n$, the Hamming weight of $x$, denoted by $wt(x)$, is the number of nonzero coordinates of $x$, the support of $x$ is ${\rm supp}(x):=\{i: 0\leq i\leq n-1, x_i=1\}$.
For $x,y\in V_n$, the Hamming distance between $x$ and $y$ is $d(x,y)=wt(x-y)$. The following defined Hadamard graph is isomorphic to that defined by Ito \cite{Ito}.

\begin{defn}Let $n$ be a positive integer with $4|n$. Define the Hadamard graph $H_n=(V_n,E_n)$, where $V_n$ is the $n$-dimensional vector space over $\mathbb{F}_2$, two vectors $x,y\in V_n$ are adjacent if and only if $d(x,y)=n/2$.\end{defn}

In this paper, we consider the graph $H_{n,\ell}$. That is, $H_{n,\ell}=(V_n,E_n^{(\ell)})$, $V_n$ is the $n$-dimensional vector space over $\mathbb{F}_2$, two vectors $x,y\in V_n$ are adjacent if and only if $d(x,y)=\ell$. Obviously, the Hadamard graph $H_n$ is $H_{n,n/2}$.

Note that if $\ell$ is odd, then $H_{n,\ell}$ is a bipartite graph and then its quantum chromatic number is $2$. Thus in next sequel, we assume that $\ell$ is even.

In this section, we first give a simple method to calculate the spectrum of $H_n$ and prove that $\chi_q(H_n)=n$. Then for the Hamming graphs, we present some new results on the quantum chromatic numbers of such graphs.

\subsection{Proof of Theorem \ref{thm-1}}\label{proof of Thm-1}

Firstly, it is easy to see that $H_n={\rm Cay}(V_n,S)$, where $S=\{x\in V_n: wt(x)=n/2\}$. The character group of $V_n$ (as an elementary commutative $2$-group of rank $n$) is $\widehat{V_n}=\{\phi_a: a\in V_n\}$, where $\phi_a(x)=(-1)^{x\cdot a}$, $x\cdot a$ is the inner product of $x$ and $a$, i.e., $x\cdot a=\sum_{i=0}^{n-1}x_ia_i$, $a=(a_0,\cdots,a_{n-1})$.
By (\ref{f-4}), the eigenvalues of $H_n$ are
\begin{equation}\label{f-18}
  \lambda_a=\sum_{s\in S}(-1)^{s\cdot a}, a\in V_n.
\end{equation}
Obviously, $\lambda_{0_n}=|S|=\tbinom{n}{n/2}$. Take $a=1_n:=(1,1,\cdots,1)$. Then
\begin{equation*}
  \lambda_{1_n}=\sum_{s\in S}(-1)^{s\cdot 1_n}=\sum_{s\in S}(-1)^{wt(s)}=\sum_{s\in S}(-1)^{n/2}=\sum_{s\in S}1=|S|=\tbinom{n}{n/2}=\lambda_{0_n}.
\end{equation*}
And for every $a\in V_n$, $a\neq 0_n, 1_n$, then $\lambda_a<\lambda_{0_n}$. Thus $\lambda_{\max}=\tbinom{n}{n/2}$ with multiplicity $2$. $H_n$ has two isomorphic components. Below, we proceed to find the minimum eigenvalue $\lambda_{\min}$.

For $a(\neq 0_n,1_n)\in V_n$,
\begin{equation*}
  \lambda_a=\sum_{s\in S}(-1)^{s\cdot a}=\sum_{x\in V_n: wt(x)=n/2}(-1)^{x\cdot a}.
\end{equation*}
Suppose that $a=(a_0,\cdots,a_{n-1})\in V_n$, $wt(a)=r$, $1\leq wt(a)<n$. Assume that ${\rm supp}(a)=\{i_1,i_2,\cdots,i_r\}$. Let $x$ run through $V_n$ with weight $n/2$. If $|{\rm supp}(x)\cup {\rm supp}(a)|=j$, then $x\cdot a=j$. A simple combinatorial counting shows that
\begin{equation*}
  \lambda_a=\sum_{s\in S}(-1)^{s\cdot a}=\sum_{x\in V_n: wt(x)=n/2}(-1)^{x\cdot a}=\sum_{j=0}^{n/2}(-1)^j\tbinom{r}{j}\tbinom{n-r}{n/2-j}=K_{n/2}^n(r).
\end{equation*}
By using the Reciprocal Law of the Krawchouk polynomials (see Theorem \ref{Krawchouk}), we have
\begin{equation*}
  K_{n/2}^n(r)=\frac{\tbinom{n}{n/2}}{\tbinom{n}{r}}K_r^n(n/2).
\end{equation*}
Since $K_r^n(n/2)$ is the coefficient of $x^r$ in $(1-x)^{n/2}(1+x)^{n-n/2}=(1-x^2)^{n/2}$. Thus, if $r=2j+1$ is odd, then $\lambda_a=K_{n/2}^n(2j+1)=0$; if $r=2j$ for some $j$, then
\begin{equation}\label{f-19}
  \lambda_a=(-1)^j\frac{\tbinom{n}{n/2}\tbinom{n/2}{j}}{\tbinom{n}{2j}}.
\end{equation}
Now, it is easy to see that the minimum eigenvalue of $H_n$ is
\begin{equation}\label{f-19'}
 \lambda_{\min}=-\frac{\tbinom{n}{n/2}\tbinom{n/2}{1}}{\tbinom{n}{2}}=-\frac{\tbinom{n}{n/2}}{{n-1}}=-\frac{\lambda_{\max}}{{n-1}}.
\end{equation}
Then, by the spectral bounds in (\ref{f-4'}), we obtain
\begin{equation}\label{f-20}
 \chi_q(H_n)\geq 1+\frac{\lambda_{\max}}{|\lambda_{\min}|}=n.
\end{equation}
Next, we show that $\chi_q(H_n)\leq n$. To this end, we need to find a quantum homomorphism of $H_n$. Very recently, Menamara \cite{Mena} found such a homomorphism. We provide his result for completeness.

For every $x=(x_0,x_1,\cdots,x_{n-1})\in V_n$, and $0\leq \alpha\leq n-1$, we define the following operators:
\begin{equation}\label{f-21}
 P_x^\alpha=(a_x^{\alpha}(i,j))_{0\leq i,j\leq n-1}, a_x^\alpha(i,j)=\frac{1}{n}\xi_n^{(j-i)\alpha}(-1)^{x_i+x_j},
\end{equation}
where $\xi_n=e^{\frac{2 \pi \sqrt{-1}}{n}}$ is an $n$-th root of unity in $\mathbb{C}$. Then it is obvious that $P_x^\alpha$ is a Hermite matrix, moreover, let $(P_x^\alpha)^2=(b(i,j))_{0\leq i,j\leq n-1}$. Then
\begin{eqnarray*}
  b(i,j) &=& \sum_{k=0}^{n-1}a_x^\alpha(i,k)a_x^{\alpha}(k,j) \\
  &=& \frac{1}{n^2}\sum_{k=0}^{n-1}\xi_n^{(k-i)\alpha}(-1)^{x_i+x_k}\xi_n^{(j-k)\alpha}(-1)^{x_j+x_k}\\
  &=&\frac{1}{n^2}\xi_n^{(j-i)\alpha}(-1)^{x_i+x_j}\sum_{k=0}^{n-1}1\\
  &=&a_x^{\alpha}(i,j).
\end{eqnarray*}
Thus $(P_x^\alpha)^2=P_x^\alpha$. That is, $P_x^{\alpha}$ is a projection.

For every $x\in V_n$, let $\triangle_x=\{P_x^{\alpha}: 0\leq \alpha\leq n-1\}$. We aim to prove $\triangle_x$ is a complete orthogonal system of $\mathbb{C}^{n\times n}$. Indeed, for every $0\leq \alpha\neq \alpha'\leq n-1$, denote $P_x^\alpha P_x^{\alpha'}=(c(i,j))$. Then
\begin{eqnarray*}
  c(i,j) &=& \sum_{k=0}^{n-1}\frac{1}{n^2}\xi_n^{(k-i)\alpha}(-1)^{x_i+x_k}\xi_n^{(j-k)\alpha'}(-1)^{x_j+x_k} \\
  &=&\frac{1}{n^2}\xi_n^{j\alpha'-i\alpha}(-1)^{x_i+x_j}\sum_{k=0}^{n-1}\xi_n^{k(\alpha-\alpha')}\\
  &=&0.
\end{eqnarray*}
Therefore, $P_x^\alpha P_x^{\alpha'}=0$.
Furthermore, we can prove that for every $x\in V_n$, the above defined $\triangle_x$ is complete, i.e., $\sum_{\alpha=0}^{n-1}P_x^{\alpha}=I_n$. Let $\sum_{\alpha=0}^{n-1}P_x^{\alpha}=(u (i,j))_{0\leq i,j\leq n-1}$. Then
\begin{eqnarray*}
  u(i,j) &=& \sum_{\alpha=0}^{n-1}a_x^{\alpha}(i,j) \\
  &=&\frac{1}{n}\sum_{\alpha=0}^{n-1}\xi_n^{(j-i)\alpha}(-1)^{x_i+x_j}\\
  &=&\delta_{i,j}(-1)^{x_i+x_j}\\
  &=&\delta_{i,j},
\end{eqnarray*}
where $\delta_{i,j}=1$ if $i=j$, and $0$ otherwise.
Thus $\sum_{\alpha=0}^{n-1}P_x^{\alpha}=I_n$.

Finally, let $x,y\in V_n$ with $(x,y)\in E$ be an edge of $H_n$, that is $d(x,y)=2t$. Then
\begin{eqnarray*}
 ( P_x^{\alpha}P_y^\alpha)(i,j)&=&\frac{1}{n^2}\sum_{k=0}^{n-1}\xi_n^{(i-k)\alpha}(-1)^{x_i+x_k}\xi_n^{(k-j)\alpha}(-1)^{y_j+y_k}\\
 &=&\frac{1}{n^2}\xi_{n}^{(i-j)\alpha}(-1)^{x_i+y_j}\sum_{k=0}^{n-1}(-1)^{x_k+y_k}\\
 &=&\frac{1}{n^2}\xi_{n}^{(i-j)\alpha}(-1)^{x_i+y_j}(-2t+4t-2t)\\
 &=&0.
\end{eqnarray*}
Thus the set $\mathfrak{F}=\{\Delta_x: x \in V_n\}$ provides a quantum colouring of $H_n$.
Therefore, by the definition of quantum chromatic numbers, we know that
\begin{equation}\label{f-23}
   \chi_q(H_n)\leq n.
\end{equation}
Combining (\ref{f-20}) and (\ref{f-23}), we have $\chi_q(H_n)=n$ as required.

\subsection{Some new results}\label{neq results}

\subsubsection{Quantum chromatic numbers of a kind of Hamming graphs}

Firstly, we have the following result:

\begin{thm}\label{main-1}Let $V_n=\mathbb{F}_2^n$ be the $n$-dimensional vector space over $\mathbb{F}_2$, $S=\{x\in V_n: wt(s)=\ell\}$. Define a graph by $H_{n,\ell}:={\rm Cay}(V_n,S)$. If $n=4t-1$ and $\ell=2t$ for some positive integer $t$, then the spectrum of $H_{n,\ell}$ is
\begin{equation}\label{f-36}
   \lambda_a=\left\{\begin{array}{cl}
                    (-1)^j\frac{\tbinom{4t-1}{2t}\tbinom{2t-1}{j}}{\tbinom{4t-1}{2j}} & \mbox{ if $wt(a)=r=2j$, $0\leq j\leq 2t-1$,} \\
                    (-1)^{j+1}\frac{\tbinom{4t-1}{2t}{\tbinom{2t-1}{j}}}{\tbinom{4t-1}{2j+1}} & \mbox{ if $wt(a)=r=2j+1$, $0\leq j\leq 2t-1$}.
                  \end{array}
 \right.
\end{equation}
Moreover,
\begin{equation}\label{37}
  \chi_q(H_{n,\ell})=n+1.
\end{equation}
\end{thm}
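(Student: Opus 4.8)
The plan is to prove the eigenvalue formula (\ref{f-36}) first, and then establish the two inequalities $\chi_q(H_{n,\ell})\ge n+1$ and $\chi_q(H_{n,\ell})\le n+1$ separately.

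\emph{Spectrum.} By (\ref{f-n1}) with $q=2$, the eigenvalue of $H_{n,\ell}$ attached to $a$ with $wt(a)=r$ is $\lambda_a=K_{2t}^{4t-1}(r)$. Applying the Reciprocal Law (\ref{f-14}) with $q=2$ gives
$$\lambda_a=\frac{\binom{4t-1}{2t}}{\binom{4t-1}{r}}\,K_r^{4t-1}(2t),$$
so it remains to expand $K_r^{4t-1}(2t)$. The Generating Function (\ref{f-15}) yields $\sum_{k}K_k^{4t-1}(2t)z^k=(1-z)^{2t}(1+z)^{2t-1}=(1-z)(1-z^2)^{2t-1}$; multiplying $(1-z^2)^{2t-1}=\sum_j(-1)^j\binom{2t-1}{j}z^{2j}$ by $(1-z)$ reads off the coefficient $(-1)^j\binom{2t-1}{j}$ at $z^{2j}$ and $(-1)^{j+1}\binom{2t-1}{j}$ at $z^{2j+1}$. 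Substituting these into the displayed identity gives (\ref{f-36}).

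\emph{Lower bound.} Here I would invoke the spectral bound (\ref{f-4'}), so I need $\lambda_{\max}$ and $\lambda_{\min}$. The graph is $\binom{4t-1}{2t}$-regular, hence $\lambda_{\max}=\lambda_{0_n}=\binom{4t-1}{2t}$. From (\ref{f-36}), every eigenvalue with $1\le r\le 4t-2$ satisfies $|\lambda_a|=\binom{4t-1}{2t}\binom{2t-1}{\lfloor r/2\rfloor}/\binom{4t-1}{r}$, while the $r=1$ eigenvalue equals $-\binom{4t-1}{2t}/(4t-1)$ and is negative; the two eigenvalues at $r\in\{0,4t-1\}$ are positive. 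Therefore $\lambda_{\min}=-\binom{4t-1}{2t}/(4t-1)$ will follow as soon as we prove the binomial inequality
$$\binom{4t-1}{r}\ \ge\ (4t-1)\binom{2t-1}{\lfloor r/2\rfloor}\qquad(1\le r\le 4t-2),$$
and then (\ref{f-4'}) gives $\chi_q(H_{n,\ell})\ge 1+\lambda_{\max}/|\lambda_{\min}|=1+(4t-1)=n+1$.

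\emph{Proof of the inequality.} I would track the ratio $h(r):=\binom{4t-1}{r}\big/\binom{2t-1}{\lfloor r/2\rfloor}$. A short computation shows $h(2k+1)=h(2k+2)$ for every $k\ge0$ (the two binomial ratios involved are reciprocal), whereas $h(2k+1)/h(2k)=(4t-1-2k)/(2k+1)$. Writing $H_k:=h(2k+1)$ one gets $H_0=h(1)=4t-1$ and $H_k/H_{k-1}=(4t-1-2k)/(2k+1)$, so $(H_k)$ is weakly increasing for $k\le t-1$ and weakly decreasing for $k\ge t$, and a telescoping product over $k=1,\dots,2t-2$ gives $H_{2t-2}=4t-1$ as well. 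Since every value $h(r)$ with $1\le r\le 4t-2$ equals some $H_k$ with $0\le k\le 2t-2$, unimodality forces $\min_r h(r)=\min_k H_k=4t-1$, which is exactly the claimed inequality.

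\emph{Upper bound, and the obstacle.} Because $n+1=4t$ is a multiple of $4$, Theorem \ref{thm-1} gives $\chi_q(H_{4t})=4t$, where $H_{4t}=H_{4t,2t}$ is the Hadamard graph on $\mathbb{F}_2^{4t}$. Appending a zero coordinate, $x\mapsto (x,0)$, embeds $\mathbb{F}_2^{4t-1}$ into $\mathbb{F}_2^{4t}$ isometrically, so $H_{n,\ell}=H_{4t-1,2t}$ is the subgraph of $H_{4t}$ induced on $\{(x,0):x\in\mathbb{F}_2^{4t-1}\}$. Since a quantum $c$-colouring restricts to any subgraph (the orthogonality condition only concerns surviving edges), $\chi_q(H_{n,\ell})\le\chi_q(H_{4t})=4t=n+1$; concretely, the projections $P_x^\alpha$ of (\ref{f-21}), with $x$ restricted to $\{(x,0)\}$, already provide the colouring. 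Combining with the lower bound yields $\chi_q(H_{n,\ell})=n+1$. The spectrum computation and the upper bound are routine given the tools already assembled; the genuine obstacle is the binomial inequality above, i.e.\ showing that $|\lambda_{\min}|$ is \emph{not} strictly larger than $\binom{4t-1}{2t}/(4t-1)$ — any slack there would make (\ref{f-4'}) fall short of $n+1$ — and the crux of that is getting the monotonicity direction of $(H_k)$ together with the endpoint identity $H_{2t-2}=4t-1$.
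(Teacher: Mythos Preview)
Your proof is correct and follows essentially the same route as the paper: the spectrum via the Reciprocal Law and the generating function $(1-z)(1-z^2)^{2t-1}$, the lower bound via Lemma~\ref{lem-2.4} after locating $\lambda_{\min}$, and the upper bound via zero-padding into $\mathbb{F}_2^{4t}$ and reusing the projections $P_x^\alpha$ from (\ref{f-21}).

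Two small differences are worth recording. For $\lambda_{\min}$, the paper restricts attention to the negative eigenvalues $\rho(4j+1),\rho(4j+2)$, uses the symmetry $\rho(r)=\rho(n-r)$ from (\ref{f-23'}) to halve the range, and checks that $|\rho(4j+2)|/|\rho(4j-2)|<1$; your unimodality argument for $h(r)=\binom{4t-1}{r}/\binom{2t-1}{\lfloor r/2\rfloor}$ together with the telescoping identity $H_{2t-2}=H_0=4t-1$ is a tidier way to bound \emph{all} non-extremal $|\lambda_a|$ at once, and avoids having to track signs. For the upper bound, the paper re-verifies by direct computation that the padded projections form a quantum $4t$-colouring of $H_{4t-1,2t}$, whereas you invoke the general (and correct) fact that $\chi_q$ is monotone under taking subgraphs together with Theorem~\ref{thm-1}; this is cleaner and makes clear that no new construction is needed.
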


\begin{proof} For every $a=(a_0,a_1,\cdots,a_{n-1})\in V_n$, if $wt(a)=r$, the corresponding eigenvalue of $H_{n,\ell}$ is
\begin{equation*}
  \lambda_a=\sum_{s\in S}(-1)^{s\cdot a}.
\end{equation*}
It is readily seen that the maximum eigenvalue of $H_{n,\ell}$ is $\lambda_{\max}=\tbinom{n}{\ell}=\lambda_{0_n}=\lambda_{1_n}$ since $\ell$ is even.

For $a(\neq 0_n,1_n)\in V_n$,
\begin{equation*}
  \lambda_a=\sum_{s\in S}(-1)^{s\cdot a}=\sum_{x\in V_n: wt(x)=\ell}(-1)^{x\cdot a}.
\end{equation*}
Moreover,
\begin{equation}\label{f-23'}
  \lambda_{1_n-a}=\sum_{s\in S}(-1)^{s\cdot (1_n-a)}=\sum_{x\in V_n: wt(x)=\ell}(-1)^{x\cdot (1_n-a)}=(-1)^\ell\lambda_a=\lambda_a.
\end{equation}
A similar analysis as that in Section \ref{proof of Thm-1} leads to
\begin{equation}\label{f-24}
  \lambda_a=\sum_{s\in S}(-1)^{s\cdot a}=\sum_{x\in V_n: wt(x)=\ell}(-1)^{x\cdot a}=K_\ell^n(r).
\end{equation}

By the Reciprocal Law of the Krawchouk polynomials (see Theorem \ref{Krawchouk}), we have
\begin{equation*}
  K_\ell^n(r)=\frac{\tbinom{n}{\ell}}{{\tbinom{n}{r}}}K_r^n(\ell).
\end{equation*}
Now, $K_r^n(\ell)$ is the coefficient of $x^r$ in the expansion of
\begin{equation*}
  (1-x)^\ell(1+x)^{n-\ell}=(1-x^2)^{2t-1}(1-x)=\sum_{j=0}^{2t-1}(-1)^j\tbinom{2t-1}{j}(x^{2j}-x^{2j+1}).
\end{equation*}
Therefore, if $r=2j$ for some $j$, then
\begin{equation}\label{f-25}
   \lambda_a=(-1)^j\frac{\tbinom{4t-1}{2t}\tbinom{2t-1}{j}}{\tbinom{4t-1}{2j}}.
\end{equation}
If  $r=2j+1$ is odd, then
\begin{equation}\label{f-26}
   \lambda_a=(-1)^{j+1}\frac{\tbinom{4t-1}{2t}\tbinom{2t-1}{j}}{\tbinom{4t-1}{2j+1}}.
\end{equation}
By (\ref{f-23'}), (\ref{f-25}) and (\ref{f-26}), one can check that
\begin{equation}\label{f-27}
   \lambda_{\min}=-\frac{\tbinom{4t-1}{2t}}{4t-1}=-\frac{\lambda_{\max}}{4t-1}.
\end{equation}

In fact, by (\ref{f-25}) and (\ref{f-26}), $\lambda_a$ depends only on $wt(a)=r$. Write $\lambda_a=\rho(r)$. To find $\lambda_{\min}$, we just need to consider $\rho(4j+2)$ and $\rho(4j+1)$ for $0\leq j\leq \lfloor \frac{t}{2}\rfloor$.

Now,
\begin{equation*}
  \frac{|\rho(4j+2)|}{|\rho(4j+1)|}=\frac{\tbinom{2t-1}{2j+1}\tbinom{4t-1}{4j+1}}{\tbinom{2t-1}{2j}\tbinom{4t-1}{4j+2}}=\frac{(4j+2)!(4t-4j-3)!(2j)!(2t-2j-1)!}{(4j+1)!(4t-4j-2)!(2j+1)!(2t-2j-2)!}=1.
\end{equation*}
Moreover, for $1\leq j\leq \lfloor\frac{t}{2}\rfloor$,
\begin{equation*}
  \frac{|\rho(4j+2)|}{|\rho(4j-2)|}=\frac{\tbinom{2t-1}{2j+1}\tbinom{4t-1}{4j-2}}{\tbinom{2t-1}{2j-1}\tbinom{4t-1}{4j+2}}<1.
\end{equation*}
So that the eigenvalues of $H_{4t-1,2t}$ with negative sign are
\begin{equation*}
  \rho(1)=\rho(2)<\rho(5)=\rho(6)<\cdots<\rho\left(4\left\lfloor \frac{t}{2}\right\rfloor-3\right)=\rho\left(4\left\lfloor \frac{t}{2}\right\rfloor-2\right).
\end{equation*}
Note that we just list one half of them, the symmetric part is $\rho(n-1)=\rho(1)$, $\rho(n-5)=\rho(5)$ and so on.
Thus, $\lambda_{\min}=\rho(1)=-\frac{\tbinom{4t-1}{2t}}{4t-1}=-\frac{\lambda_{\max}}{4t-1}.$

Thanks to Lemma \ref{lem-2.4}, we have
\begin{equation}\label{f-28}
  \chi_q(H_{4t-1,2t})\geq 4t=n+1.
\end{equation}

In order to prove $\chi_q(H_{4t-1,2t})\leq 4t$, we need to find a proper quantum colouring of such a graph. To this end, for every $x=(x_0,x_1,\cdots,x_{n-1})\in V_n$, we embed it to $V_{n+1}$ by adding a coordinate at the end of $x$ as $\widetilde{x}=(x_0,x_1,\cdots,x_{n-1},0)$, i.e., $x_n=0$. Define the following set of operators on $\mathbb{C}^{4t}$:
\begin{equation*}
  \mathfrak{F}:=\{P_x^{\alpha}: x\in V_n, 0\leq \alpha\leq 4t-1\},
\end{equation*}{ where }
\begin{equation*}
  P_x^{\alpha}=(p_x^{\alpha}(i,j))_{0\leq i,j\leq 4t-1}, p_x^{\alpha}(i,j)=\frac{1}{4t}\xi_{4t}^{(j-i)\alpha}(-1)^{x_i+x_j}.
\end{equation*}
It is obvious that each operator is Hermitian, and the $(i,j)$-th entry of $(P_x^\alpha)^2$ is
\begin{equation*}
  ((P_x^\alpha)^2){(i,j)}=\sum_{k=0}^{4t-1}p_x^\alpha(i,k)p_x^\alpha(k,j)=\frac{1}{(4t)^2}\xi_{4t}^{(j-i)\alpha}(-1)^{x_i+x_j}\sum_{k=0}^{4t-1}1= p_x^{\alpha}(i,j).
\end{equation*}
Thus $P_x^\alpha$ is a projection. Similarly, one can prove that
\begin{equation*}
  P_x^\alpha P_x^{\alpha'}=0 \mbox{ if } \alpha\neq \alpha',
\end{equation*}
\begin{equation*}
\mbox{ and} \sum_{0\leq \alpha\leq 4t-1}P_x^{\alpha}=I_{4t}.
\end{equation*}
 Now, we assume that $(x,y)\in E_{4t-1,2t}$ is an edge of $H_{4t-1,2t}$, i.e., $d(x,y)=2t$, then we need to prove that $P_x^\alpha P_y^\alpha=0$, whence $x=(x_0,x_1,\cdots,x_{4t-2}), y=(y_0,y_1,\cdots,y_{4t-2})$ and $x_{4t-1}=y_{4t-1}=0$.
 Since
 \begin{equation*}
   \sum_{k=0}^{4t-1}(-1)^{x_k+y_k}=1+\sum_{k=0}^{4t-2}(-1)^{x_k+y_k}=1-2t+2t-1=0,
 \end{equation*}
 it follows that
 \begin{equation*}
   (P_x^\alpha P_y^\alpha)_{i,j}=\sum_{k=0}^{4t-1}p_x^{\alpha}(i,k)p_y^{\alpha}(k,j)=\frac{1}{(4t)^2}\xi_{4t}^{(j-i)\alpha}(-1)^{x_i+y_j}\sum_{k=0}^{4t-1}(-1)^{x_k+y_k}=0.
 \end{equation*}
 Thus $P_x^{\alpha}P_y^{\alpha}=0$ if $(x,y)\in  E_{4t-1,2t}$ is an edge.

 Therefore, $\mathfrak{F}$ gives a quantum colouring of $H_{4t-1,2t}$, and then
 \begin{equation}\label{f-30}
    \chi_q(H_{4t-1,2t})\leq 4t=n+1.
 \end{equation}
 Combining (\ref{f-28}) and (\ref{f-30}) yields $\chi_q(H_{4t-1,2t})=4t=n+1$.
 This completes the proof.
\end{proof}

\begin{remark}We note that, except for some trivial cases, the family of graphs in Theorem \ref{main-1} is the second known family of infinite graphs whose quantum chromatic numbers are exactly determined, up to our knowledge.\end{remark}

\subsubsection{A upper bound for quantum chromatic numbers of Hamming graphs}

Let $\ell$ be a positive integer. $H_{n,\ell}={\rm Cay}(V_n,S)$, where $S=\{x\in V_n: wt(x)=\ell\}$. If $\ell$ is odd, then $H_{n,\ell}$ is a bipartite graph and then $\chi_q(H_{n,\ell})=2$. Thus we assume that $\ell=2t$ for some integer $t$. In this case, we have the following result:

\begin{thm}\label{thm-3.5}Let notations be defined as above, if $\ell\geq n/2$, then
\begin{equation}\label{f-31}
  \chi_q(H_{n,\ell})\leq 2\ell.
\end{equation}
\end{thm}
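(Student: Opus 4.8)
The plan is to reproduce, in the correct ambient dimension, the explicit quantum colouring from the upper-bound half of Theorem \ref{main-1}: I will construct a quantum homomorphism from $H_{n,\ell}$ to $K_{2\ell}$ out of $2\ell\times 2\ell$ projection matrices indexed by $2\ell$-th roots of unity. The hypothesis $\ell\ge n/2$ enters exactly once and in an essential way: it guarantees $2\ell\ge n$, so that each $x=(x_0,\dots,x_{n-1})\in V_n$ can be zero-padded to $\widetilde x=(x_0,\dots,x_{n-1},0,\dots,0)\in\mathbb F_2^{2\ell}$, which is the vector that will label the $x$-th projective system.

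Concretely, set $d=2\ell$ and $\xi_d=e^{2\pi\sqrt{-1}/d}$, and for each $x\in V_n$ and $0\le\alpha\le d-1$ put
\begin{equation*}
 P_x^\alpha=\bigl(p_x^\alpha(i,j)\bigr)_{0\le i,j\le d-1},\qquad p_x^\alpha(i,j)=\frac1d\,\xi_d^{(j-i)\alpha}(-1)^{\widetilde x_i+\widetilde x_j}.
\end{equation*}
Then I would verify, by the same elementary geometric-sum manipulations used in Section \ref{proof of Thm-1} and in the proof of Theorem \ref{main-1}, the four structural properties making $\triangle_x=\{P_x^\alpha:0\le\alpha\le d-1\}$ a complete orthogonal projective system on $\mathbb C^{d}$: each $P_x^\alpha$ is Hermitian; $(P_x^\alpha)^2=P_x^\alpha$ since $\sum_{k=0}^{d-1}1=d$ cancels the factor $1/d^2$; $P_x^\alpha P_x^{\alpha'}=0$ for $\alpha\ne\alpha'$ since $\sum_{k=0}^{d-1}\xi_d^{k(\alpha-\alpha')}=0$; and $\sum_{\alpha=0}^{d-1}P_x^\alpha=I_d$ since $\frac1d\sum_{\alpha=0}^{d-1}\xi_d^{(j-i)\alpha}=\delta_{ij}$ for $0\le i,j\le d-1$ and $(-1)^{\widetilde x_i+\widetilde x_i}=1$. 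None of this uses the parity of $\ell$, nor the value of $n$.

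The only step carrying real content is the cross-edge orthogonality. For an edge $(x,y)$ of $H_{n,\ell}$ one has $d(x,y)=\ell$, and exactly as in Theorem \ref{main-1},
\begin{equation*}
 (P_x^\alpha P_y^\alpha)(i,j)=\frac1{d^2}\,\xi_d^{(i-j)\alpha}(-1)^{\widetilde x_i+\widetilde y_j}\sum_{k=0}^{d-1}(-1)^{\widetilde x_k+\widetilde y_k},
\end{equation*}
so everything hinges on the scalar $\sum_{k=0}^{d-1}(-1)^{\widetilde x_k+\widetilde y_k}$. The $2\ell-n$ padded coordinates contribute $+1$ each, while the first $n$ coordinates contribute $(n-d(x,y))-d(x,y)=n-2\ell$; the total is $(2\ell-n)+(n-2\ell)=0$. (This scalar equals $2(\ell-d(x,y))$, which vanishes on an edge; the role of $\ell\ge n/2$ is only to make $2\ell-n\ge0$ so that the padding is well defined.) Hence $P_x^\alpha P_y^\alpha=0$ whenever $(x,y)$ is an edge, so $\mathfrak F=\{\triangle_x:x\in V_n\}$ is a quantum colouring of $H_{n,\ell}$ with $d=2\ell$ colours, and therefore $\chi_q(H_{n,\ell})\le 2\ell$.

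I do not anticipate a genuine obstacle: this is a direct generalization of the colouring used for Theorem \ref{main-1}, and the only thing to spot is that $\mathbb F_2^{2\ell}$ is the right space into which to embed $V_n$ — large enough for the zero-padding, and such that the identity $\sum_{k}(-1)^{\widetilde x_k+\widetilde y_k}=0$ persists on edges. It is perhaps worth remarking that specializing $n=4t-1$, $\ell=2t$ recovers the colouring of Theorem \ref{main-1} (where the bound is tight), whereas in general $2\ell$ need not equal $\chi_q(H_{n,\ell})$; a matching lower bound would require a separate spectral argument à la Lemma \ref{lem-2.4}, which this proof does not attempt.
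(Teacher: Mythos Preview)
Your proposal is correct and essentially identical to the paper's own proof: set $d=2\ell$, zero-pad each $x\in V_n$ to length $d$, define the rank-one projections $P_x^\alpha$ via $\frac{1}{d}\xi_d^{(j-i)\alpha}(-1)^{x_i+x_j}$, and verify the completeness/orthogonality properties plus the edge condition $\sum_{k=0}^{d-1}(-1)^{\widetilde x_k+\widetilde y_k}=(n-2\ell)+(2\ell-n)=0$. The only cosmetic discrepancy is a harmless sign $(i-j)$ versus $(j-i)$ in the $\xi_d$ exponent of the cross-term, which does not affect the argument.
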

\begin{proof}Denote $d=2\ell(\geq n)$. For every $x=(x_0,x_1,\cdots,x_{n-1})\in V_n$, we let
$x_{n}=\cdots =x_{d-1}=0$ if $d>n$.
Define a set of operators as follows:
\begin{equation*}
  \mathfrak{F}=\{P_x^{\alpha}: x\in V_n, 0\leq \alpha\leq d-1\},
\end{equation*}
where $P_x^{\alpha}(i,j)=\frac{1}{d}\xi_d^{(i-j)\alpha}(-1)^{x_i+x_j}, 0\leq i,j\leq d-1$.

Using a similar method as that in the previous section, we can prove that $\mathfrak{F}$ forms a complete orthogonal system of $\mathbb{C}^d$.

When $0\leq \alpha\leq d-1$, we claim that if $x,y\in V_n$ with $d(x,y)=\ell$, it holds that $P_x^{\alpha}P_y^{\alpha}=0$. Note that for $y\in V_n$, we define $y_{n}=\cdots=y_{d-1}=0$. Indeed, we have
\begin{equation*}
  P_x^{\alpha}P_y^{\alpha}(i,j)=\frac{1}{d^2}\sum_{k=0}^{d-1}\xi_d^{(i-k)\alpha}(-1)^{x_i+x_k}\xi_d^{(j-k)\alpha}(-1)^{y_k+y_j}=\frac{1}{d^2}\xi_d^{(j-i)\alpha}(-1)^{x_i+x_j}\sum_{k=0}^{d-1}(-1)^{x_k+y_k}.
\end{equation*}
Now,
\begin{equation*}
  \sum_{k=0}^{d-1}(-1)^{x_k+y_k}= \sum_{k=0}^{n-1}(-1)^{x_k+y_k}+ \sum_{k=n}^{d-1}(-1)^{x_k+y_k}=(-\ell+n-\ell)+(d-n)=0.
\end{equation*}
Therefore, $\mathfrak{F}$ gives a proper quantum colouring of $H_{n,\ell}$, and then the desired result follows.
\end{proof}
Particularly, if $n=4t+2$, $\ell=2t+2$, we have
\begin{prop}\label{prop-3.6} Let $H_{n,\ell}={\rm Cay}(V_n,S)$, where $S=\{x\in V_n: wt(x)=\ell\}$. If $n=4t+2,\ell=2t+2$, then the spectrum of $H_{n,\ell}$ is given by
\begin{equation}\label{f-33}
 \lambda_a=\left\{\begin{array}{cl}
                    (-1)^j\frac{\left[\tbinom{2t}{j}-\tbinom{2t}{j-1}\right]{\tbinom{n}{\ell}}}{\tbinom{n}{2j}} & \mbox{ if $wt(a)=r=2j$,} \\
                    (-1)^{j+1}\frac{2\tbinom{2t}{j}\tbinom{n}{\ell}}{\tbinom{n}{2j+1}} & \mbox{ if $wt(a)=r=2j+1$}.
                  \end{array}
 \right.
\end{equation} Moreover,
\begin{equation}\label{f-34}
 \ell\leq \chi_q(H_{n,\ell})\leq 2\ell.
\end{equation}
\end{prop}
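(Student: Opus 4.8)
The plan is to follow the pattern of Section~\ref{proof of Thm-1} and the proof of Theorem~\ref{main-1}. First I would observe that $H_{n,\ell}={\rm Cay}(V_n,S)$ is a normal Cayley graph, so by~(\ref{f-4}) its eigenvalue attached to a vector $a$ of weight $r$ is $\lambda_a=\sum_{s\in S}(-1)^{s\cdot a}$, which, by exactly the combinatorial count used for~(\ref{f-24}), equals $K_\ell^n(r)$. Applying the Reciprocal Law of the Krawchouk polynomials (Theorem~\ref{Krawchouk}) gives $\lambda_a=\frac{\tbinom{n}{\ell}}{\tbinom{n}{r}}K_r^n(\ell)$, and $K_r^n(\ell)$ is the coefficient of $x^r$ in
\begin{equation*}
 (1-x)^{\ell}(1+x)^{n-\ell}=(1-x)^{2t+2}(1+x)^{2t}=(1-2x+x^{2})\sum_{k=0}^{2t}(-1)^{k}\tbinom{2t}{k}x^{2k}.
\end{equation*}
Reading off the coefficient of $x^{2j}$ (contributed by the monomials $1$ and $x^{2}$) and of $x^{2j+1}$ (contributed by $-2x$) yields formula~(\ref{f-33}) at once. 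The upper bound $\chi_q(H_{n,\ell})\le 2\ell$ then follows immediately from Theorem~\ref{thm-3.5}, since $\ell=2t+2\ge 2t+1=n/2$.

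For the lower bound I would invoke Lemma~\ref{lem-2.4}, which reduces matters to locating $\lambda_{\max}$ and $\lambda_{\min}$. From~(\ref{f-33}) we get $\lambda_{0_n}=\lambda_{1_n}=\tbinom{n}{\ell}$, and since $H_{n,\ell}$ is $\tbinom{n}{\ell}$-regular this is $\lambda_{\max}$; moreover the same computation as in~(\ref{f-23'}) shows $\lambda_a$ depends only on $r=wt(a)$ through a function $\rho$ with $\rho(r)=\rho(n-r)$, so it suffices to scan $0\le r\le n/2$. Checking signs in~(\ref{f-33}): for $r=2j$ with $1\le j\le t$ the bracket $\tbinom{2t}{j}-\tbinom{2t}{j-1}$ is positive, so $\rho(2j)<0$ exactly when $j$ is odd; and for $r=2j+1$ with $0\le j\le t$ one has $\rho(2j+1)<0$ exactly when $j$ is even. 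Hence, for $r\le n/2$, the negative eigenvalues occur precisely at the weights $r=4m+1$ and $r=4m+2$.

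I would then pin down the minimum by the ratio test used in the proof of Theorem~\ref{main-1}. Routine binomial manipulations give
\begin{equation*}
 \frac{|\rho(4m+2)|}{|\rho(4m+1)|}=\frac{2t-4m-1}{4t-4m+1},\qquad
 \frac{|\rho(4m+5)|}{|\rho(4m+1)|}=\frac{(4m+3)(4m+5)}{(4t-4m+1)(4t-4m-1)},
\end{equation*}
and both quantities are strictly less than $1$ for every $m$ with $4m+1,4m+2\le n/2$. Consequently $|\rho(r)|$, restricted to the weights producing negative eigenvalues, is largest at $r=1$, so
\begin{equation*}
 \lambda_{\min}=\rho(1)=-\frac{2\tbinom{n}{\ell}}{n}=-\frac{\tbinom{n}{\ell}}{2t+1}=-\frac{\lambda_{\max}}{2t+1}.
\end{equation*}
Lemma~\ref{lem-2.4} now yields $\chi_q(H_{n,\ell})\ge 1+\frac{\lambda_{\max}}{|\lambda_{\min}|}=1+(2t+1)=2t+2=\ell$, which is~(\ref{f-34}).

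The main obstacle is the verification in the last paragraph, namely that $\rho(1)$ is genuinely the smallest eigenvalue: one must confirm that no other negative Krawchouk value $\rho(4m+1)$ or $\rho(4m+2)$ beats it in absolute value. This is exactly where the two ratio estimates are needed; their proof is mechanical but relies on the cancellations between $\tbinom{4t+2}{4m+i}$ and $\tbinom{2t}{2m}$ (each factor $4t-4m-s$ being twice a factor $2t-2m-s'$) together with the monotonicity of the middle binomial coefficients. Everything else is a direct transcription of Section~\ref{proof of Thm-1}.
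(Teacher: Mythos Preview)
Your proposal is correct and follows essentially the same route as the paper: compute the eigenvalues via the Reciprocal Law and the generating function $(1-x)^{2t+2}(1+x)^{2t}=(1-x)^2(1-x^2)^{2t}$, invoke Theorem~\ref{thm-3.5} for the upper bound, and use Lemma~\ref{lem-2.4} together with $\lambda_{\min}=\rho(1)=-\tbinom{n}{\ell}/(2t+1)$ for the lower bound. The only difference is that where the paper simply says ``it is a routine to check that $\lambda_{\min}=-\tbinom{n}{\ell}/(2t+1)$ as we have done in previous sections,'' you actually carry out the two ratio comparisons; your formulas $\frac{|\rho(4m+2)|}{|\rho(4m+1)|}=\frac{2t-4m-1}{4t-4m+1}$ and $\frac{|\rho(4m+5)|}{|\rho(4m+1)|}=\frac{(4m+3)(4m+5)}{(4t-4m+1)(4t-4m-1)}$ are correct and do the job.
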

\begin{proof}Since $\ell>n/2$, the upper bound $\chi_q(H_{n,\ell})\leq 2\ell$ follows from Theorem \ref{thm-3.5}. Now, we prove the lower bound. In order to prove it, we compute the eigenvalues of $H_{n,\ell}$.
For every $a\in V_n$, if $wt(a)=r$, the eigenvalue of $H_{n,\ell}$ corresponding to $a$ is
\begin{equation*}
  \lambda_a=\sum_{x\in V_n, wt(x)=\ell}(-1)^{x \cdot a}=K_\ell^n(r)=\frac{\tbinom{n}{\ell}}{\tbinom{n}{ r}}K_r^n(\ell).
\end{equation*}
When $n=4t+2,\ell=2t+2$, $K_r^n(\ell)$ is the coefficient of $x^r$ in the expansion of
\begin{equation*}
   (1-x)^{2t+2}(1+x)^{2t} =1+x^{4t+2}+\sum_{j=1}^{2t}(-1)^j\left[\tbinom{2t}{j}-\tbinom{2t}{j-1}\right]x^{2j}-2\sum_{j=1}^{2t}(-1)^j\tbinom{2t}{j}x^{2t+1}.
\end{equation*}
Thus, we have
\begin{equation}\label{f-35}
 \lambda_a=\left\{\begin{array}{cl}
                    (-1)^j\frac{\left[\tbinom{2t}{j}-\tbinom{2t} {j-1}\right]\tbinom{n}{\ell}}{\tbinom{n}{2j}} & \mbox{ if $wt(a)=r=2j$,} \\
                    (-1)^{j+1}\frac{2\tbinom{2t}{j}\tbinom{n}{\ell}}{\tbinom{n}{2j+1}} & \mbox{ if $wt(a)=r=2j+1$}.
                  \end{array}
 \right.
\end{equation}
It is a routine to check that $\lambda_{\min}=-\tbinom{n }{\ell}/(2t+1)$ as we have done in previous sections. By the spectral bounds on the quantum chromatic numbers, see Lemma \ref{lem-2.4}, we have
\begin{equation*}
  \chi_q(H_{4t+2,2t+2})\geq 2t+2=\ell.
\end{equation*}
This completes the proof.
\end{proof}

\begin{remark}Form the discussions before this, we can see that for the Hamming graphs $H_{n,\ell}$, we can use the properties of Krawchouk polnomials to evaluate the spectra of these graphs, and then by the spectral bounds on quantum chromatic numbers, we can get a lower bound on such numbers, especially, when $n-2\ell$ is small. If $\ell>n/2$, we have a upper bound for the quantum chromatic numbers. But for $\ell<n/2$, we don't know how to find such a upper bound. The following question is still open.

\noindent{\bf Open question}: Find a upper bound on $\chi_q(H_{n,\ell})$ when $2\ell<n$.
\end{remark}

\subsection{Quantum chromatic numbers of products of graphs}

Let $\Gamma_1$ and $\Gamma_2$ be two graphs. The product of $\Gamma_1$ and $\Gamma_2$, denoted by $\Gamma_1\times \Gamma_2$, is defined by $\Gamma_1\times \Gamma_2=(V,E)$, where
$V=V(\Gamma_1)\times V(\Gamma_2)=\{(x_1,x_2): x_1\in V(\Gamma_1), x_2\in V(\Gamma_2)$ and $E=\{((x_1,x_2),(y_1,y_2)): (x_1,y_1)\in E(\Gamma_1) \mbox{ and } (x_2,y_2)\in E(\Gamma_2)\}$. Let the eigenvalues of $\Gamma_1$ be $\lambda_1\geq \cdots \geq \lambda_n$ and the eigenvalues of $\Gamma_2$ be $\mu_1\geq \cdots \geq \mu_m$. Then the eigenvalues of
$\Gamma_1\times \Gamma_2$ are $\lambda_i\mu_j: 1\leq i\leq n, 1\leq j\leq m$. Thus $\lambda_{\max}(\Gamma_1\times \Gamma_2)=\lambda_1\mu_1$, and $\lambda_{\min}(\Gamma_1\times \Gamma_2)=\min\{\lambda_1\mu_m,\lambda_n\mu_1\}$.
In \cite{SM}, a upper bound on the quantum chromatic numbers of products of graphs is given as follows:

\begin{lem}\cite{SM} \label{lem-3.8} Let notations be defined as above. Then
\begin{equation}\label{f-38}
 \chi_q(\Gamma_1\times \Gamma_2)\leq \min\{\chi_q(\Gamma_1), \chi_q(\Gamma_2)\}.
\end{equation}
\end{lem}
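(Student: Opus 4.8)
The plan is to reduce the statement to the monotonicity of the quantum chromatic number under graph homomorphisms: if there is a (classical) graph homomorphism $\varphi\colon \Gamma\to \Gamma'$, then $\chi_q(\Gamma)\le \chi_q(\Gamma')$. Granting this, it suffices to observe that for the product $\Gamma_1\times\Gamma_2$ defined above, each coordinate projection $\pi_i\colon \Gamma_1\times\Gamma_2\to\Gamma_i$, $(x_1,x_2)\mapsto x_i$, is a graph homomorphism: if $((x_1,x_2),(y_1,y_2))\in E$, then by definition of the product $(x_1,y_1)\in E(\Gamma_1)$ and $(x_2,y_2)\in E(\Gamma_2)$, so $\pi_i$ sends edges to edges. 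Applying the monotonicity to $\pi_1$ and to $\pi_2$ gives $\chi_q(\Gamma_1\times\Gamma_2)\le\chi_q(\Gamma_1)$ and $\chi_q(\Gamma_1\times\Gamma_2)\le\chi_q(\Gamma_2)$, whence $\chi_q(\Gamma_1\times\Gamma_2)\le\min\{\chi_q(\Gamma_1),\chi_q(\Gamma_2)\}$.

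It then remains to establish the monotonicity, which I would do by explicitly pulling back a quantum colouring along $\varphi$. Suppose $c=\chi_q(\Gamma')$, witnessed by an integer $d$ and, for each $x'\in V(\Gamma')$, a complete orthogonal projective system $\mathfrak{F}_{x'}=\{P_{x',y}\colon y\in V(K_c)\}$ in $\mathbb{C}^{d\times d}$ satisfying the completeness and orthogonality conditions in the definition of a quantum homomorphism $\Gamma'\to K_c$. For every $x\in V(\Gamma)$ set $\mathfrak{G}_x:=\{Q_{x,y}:=P_{\varphi(x),y}\colon y\in V(K_c)\}$. Completeness of $\mathfrak{G}_x$ is inherited verbatim from $\mathfrak{F}_{\varphi(x)}$: the $Q_{x,y}$ are projections, pairwise orthogonal for distinct $y$, and sum to $I_d$. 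For orthogonality, let $(x,x')\in E(\Gamma)$ and $y\ne y'$ in $V(K_c)$ (so $(y,y')\notin E(K_c)$); since $\varphi$ is a homomorphism, $(\varphi(x),\varphi(x'))\in E(\Gamma')$, and the orthogonality condition for $\mathfrak{F}$ gives $Q_{x,y}Q_{x',y'}=P_{\varphi(x),y}P_{\varphi(x'),y'}=0_d$. Hence $\{\mathfrak{G}_x\}_{x\in V(\Gamma)}$ is a quantum homomorphism $\Gamma\to K_c$, so $\chi_q(\Gamma)\le c=\chi_q(\Gamma')$.

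The computations involved are entirely routine and there is no real analytic obstacle; the one point that genuinely needs care is the verification of the orthogonality axiom, and in particular the fact that an edge of $\Gamma_1\times\Gamma_2$ projects to an edge in each factor -- this is precisely the definition of the (categorical/tensor) product used in this paper, and it is what makes both projections homomorphisms. (The same argument would fail for the Cartesian product, where an edge may have equal first coordinates.) Once this is in hand the result is immediate, and as a sanity check one also recovers the classical inequality $\chi_q(\Gamma_1\times\Gamma_2)\le\min\{\chi(\Gamma_1),\chi(\Gamma_2)\}$ via (\ref{f-2}).
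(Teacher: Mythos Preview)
The paper does not prove this lemma; it is simply quoted from \cite{SM} without argument, so there is no in-paper proof to compare against. Your approach --- monotonicity of $\chi_q$ under classical graph homomorphisms, applied to the coordinate projections $\pi_i\colon \Gamma_1\times\Gamma_2\to\Gamma_i$ --- is correct and is the standard way to see this inequality.

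There is, however, a small slip in your verification of the orthogonality axiom. In $K_c$ every pair of \emph{distinct} vertices is an edge, so the non-edge condition $(y,y')\notin E(K_c)$ means $y=y'$, not $y\ne y'$ as you wrote. The check you actually need is: if $(x,x')\in E(\Gamma)$ and $y=y'$, then
\[
Q_{x,y}Q_{x',y}=P_{\varphi(x),y}P_{\varphi(x'),y}=0_d.
\]
This follows exactly by your reasoning, since $(\varphi(x),\varphi(x'))\in E(\Gamma')$ and $(y,y)\notin E(K_c)$, so the orthogonality hypothesis on $\mathfrak{F}$ applies. With this correction the proof goes through unchanged.
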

By Lemma \ref{lem-3.8}, Elphick and Wocjan \cite{CW} proved the following result.

\begin{thm}[\cite{CW}, Prop. 3.2]\label{thm-3.9} Suppose that the eigenvalues of $\Gamma_1$ are $\lambda_1\geq \cdots \geq \lambda_n$ and the eigenvalues of $\Gamma_2$ are $\mu_1\geq \cdots \geq \mu_m$. If $\frac{\lambda_1}{|\lambda_n|}\geq \frac{\mu_1}{|\mu_m|}$, then
\begin{equation}\label{f-40}
  \chi_q(\Gamma_1\times \Gamma_2)\geq 1+\frac{\mu_1}{|\mu_m|}.
\end{equation}
\end{thm}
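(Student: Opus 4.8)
The statement to prove is Theorem \ref{thm-3.9}: if the spectral ratio of $\Gamma_1$ dominates that of $\Gamma_2$, i.e. $\frac{\lambda_1}{|\lambda_n|}\ge\frac{\mu_1}{|\mu_m|}$, then $\chi_q(\Gamma_1\times\Gamma_2)\ge 1+\frac{\mu_1}{|\mu_m|}$. The plan is to combine the eigenvalue formula for the (categorical) product of graphs with the Hoffman-type spectral lower bound on quantum chromatic numbers (Lemma \ref{lem-2.4}). The key observation is that among all products $\lambda_i\mu_j$, the largest is $\lambda_1\mu_1$ and the smallest is $\min\{\lambda_1\mu_m,\ \lambda_n\mu_1\}$, so the ratio controlling the Hoffman bound for $\Gamma_1\times\Gamma_2$ is $\frac{\lambda_1\mu_1}{|\min\{\lambda_1\mu_m,\lambda_n\mu_1\}|} = \frac{\lambda_1\mu_1}{\max\{\lambda_1|\mu_m|,\ |\lambda_n|\mu_1\}}$.

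First I would record, from the discussion preceding the theorem, that $\lambda_{\max}(\Gamma_1\times\Gamma_2)=\lambda_1\mu_1$ and $\lambda_{\min}(\Gamma_1\times\Gamma_2)=\min\{\lambda_1\mu_m,\lambda_n\mu_1\}$ (here $\lambda_1,\mu_1>0$ since the graphs have edges, and $\lambda_n,\mu_m<0$). Then I would apply Lemma \ref{lem-2.4} to the graph $\Gamma_1\times\Gamma_2$:
\begin{equation*}
  \chi_q(\Gamma_1\times\Gamma_2)\ \ge\ 1+\frac{\lambda_{\max}(\Gamma_1\times\Gamma_2)}{|\lambda_{\min}(\Gamma_1\times\Gamma_2)|}\ =\ 1+\frac{\lambda_1\mu_1}{\max\{\lambda_1|\mu_m|,\ |\lambda_n|\mu_1\}}.
\end{equation*}
It then remains to show that the hypothesis $\frac{\lambda_1}{|\lambda_n|}\ge\frac{\mu_1}{|\mu_m|}$ forces this last quantity to be at least $1+\frac{\mu_1}{|\mu_m|}$.

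The decisive step is the elementary inequality: from $\frac{\lambda_1}{|\lambda_n|}\ge\frac{\mu_1}{|\mu_m|}$ we get $\lambda_1|\mu_m|\ge\mu_1|\lambda_n|$, hence $\max\{\lambda_1|\mu_m|,\ |\lambda_n|\mu_1\}=\lambda_1|\mu_m|$. Substituting this into the displayed bound gives
\begin{equation*}
  \chi_q(\Gamma_1\times\Gamma_2)\ \ge\ 1+\frac{\lambda_1\mu_1}{\lambda_1|\mu_m|}\ =\ 1+\frac{\mu_1}{|\mu_m|},
\end{equation*}
which is exactly the claim. I expect no serious obstacle here: the only point requiring a little care is the sign bookkeeping — checking that $\Gamma_1\times\Gamma_2$ indeed has a negative eigenvalue (so that $|\lambda_{\min}|>0$ and Lemma \ref{lem-2.4} applies nontrivially), which holds because $\lambda_1\mu_m<0$, and confirming that the ``$\min$'' of the two negative products translates correctly into a ``$\max$'' of their absolute values. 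One might also remark that by symmetry the roles of $\Gamma_1,\Gamma_2$ can be swapped, so in general $\chi_q(\Gamma_1\times\Gamma_2)\ge 1+\min\{\lambda_1/|\lambda_n|,\ \mu_1/|\mu_m|\}$, which together with Lemma \ref{lem-3.8} shows the categorical product preserves the Hoffman-type quantum bound.
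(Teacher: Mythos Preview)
Your argument is correct and is exactly the standard proof: apply Lemma \ref{lem-2.4} to $\Gamma_1\times\Gamma_2$, use that the eigenvalues of the product are $\lambda_i\mu_j$ so that $\lambda_{\max}=\lambda_1\mu_1$ and $|\lambda_{\min}|=\max\{\lambda_1|\mu_m|,|\lambda_n|\mu_1\}$, and then observe that the hypothesis $\lambda_1/|\lambda_n|\ge\mu_1/|\mu_m|$ selects $\lambda_1|\mu_m|$ as the maximum. The paper does not give its own proof of this theorem (it is quoted from \cite{CW}), but the ingredients it recalls just before the statement --- the eigenvalue description of $\Gamma_1\times\Gamma_2$ and Lemma \ref{lem-2.4} --- are precisely the ones you used, so your write-up matches the intended derivation.
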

As a consequence, we have
\begin{cor}\label{cor-3.10} If $\chi_q(\Gamma_1)=1+\frac{\lambda_1}{|\lambda_n|}$ and $\chi_q(\Gamma_2)=1+\frac{\mu_1}{|\mu_m|}$, then
\begin{equation}\label{f-41}
   \chi_q(\Gamma_1\times \Gamma_2)=\min\{\chi_q(\Gamma_1), \chi_q(\Gamma_2)\}.
\end{equation}
\end{cor}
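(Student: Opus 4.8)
The plan is to sandwich $\chi_q(\Gamma_1\times\Gamma_2)$ between matching upper and lower bounds, both of which are already available in the excerpt. For the upper bound there is nothing to prove: Lemma~\ref{lem-3.8} gives $\chi_q(\Gamma_1\times\Gamma_2)\leq\min\{\chi_q(\Gamma_1),\chi_q(\Gamma_2)\}$ unconditionally, so this half is immediate and uses neither hypothesis.

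For the lower bound I would argue by a harmless relabeling. Swap the roles of $\Gamma_1$ and $\Gamma_2$ if necessary so that $\chi_q(\Gamma_1)\geq\chi_q(\Gamma_2)$; note the product $\Gamma_1\times\Gamma_2$ is (iso)symmetric in its two factors, so this costs nothing. By the two hypotheses, $\chi_q(\Gamma_1)\geq\chi_q(\Gamma_2)$ is the same statement as $1+\frac{\lambda_1}{|\lambda_n|}\geq 1+\frac{\mu_1}{|\mu_m|}$, i.e. $\frac{\lambda_1}{|\lambda_n|}\geq\frac{\mu_1}{|\mu_m|}$ (here both ratios are nonnegative, since each graph has an edge and hence $\lambda_n<0$, $\mu_m<0$, so subtracting the $1$ from both sides causes no sign trouble). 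This is precisely the hypothesis of Theorem~\ref{thm-3.9}, which therefore yields $\chi_q(\Gamma_1\times\Gamma_2)\geq 1+\frac{\mu_1}{|\mu_m|}=\chi_q(\Gamma_2)=\min\{\chi_q(\Gamma_1),\chi_q(\Gamma_2)\}$.

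Combining the two inequalities gives $\chi_q(\Gamma_1\times\Gamma_2)=\min\{\chi_q(\Gamma_1),\chi_q(\Gamma_2)\}$, which is the claim.

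I do not expect any real obstacle here; the corollary is essentially a bookkeeping consequence of Lemma~\ref{lem-3.8} and Theorem~\ref{thm-3.9}. The only points deserving a line of care are that Theorem~\ref{thm-3.9} is not symmetric in $\Gamma_1$ and $\Gamma_2$ — it must be applied with the factor carrying the \emph{larger} Hoffman-type ratio in the role of ``$\Gamma_1$'' — and that this is exactly the factor with the larger $\chi_q$ under the stated hypotheses, so the same relabeling fixes both issues simultaneously.
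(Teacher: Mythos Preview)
Your argument is correct and is exactly the intended one: the paper states Corollary~\ref{cor-3.10} immediately after Theorem~\ref{thm-3.9} with the phrase ``As a consequence, we have'' and gives no further proof, so the sandwich you describe---Lemma~\ref{lem-3.8} for the upper bound, Theorem~\ref{thm-3.9} (after the symmetric relabeling) for the lower bound---is precisely what the authors have in mind. Your remark about applying Theorem~\ref{thm-3.9} with the factor of larger Hoffman ratio in the ``$\Gamma_1$'' slot is the only nontrivial point, and you handle it correctly.
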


By Corollary \ref{cor-3.10}, we get the following result for products of Hamming graphs.
\begin{thm}\label{thm-3.11} Let $n_i,\ell_i$ $(i=1,2)$ and $s\geq t$ be positive integers. Then
\begin{enumerate}
  \item $\chi_q(H_{4t,2t}\times H_{4s,2s})=4t$;
  \item $\chi_q(H_{4t-1,2t}\times H_{4s,2s})=4t$;
  \item $\chi_q(H_{4t,2t}\times H_{4s-1,2s})=4t$;
  \item $\chi_q(H_{4t-1,2t}\times H_{4s-1,2s})=4t$.
\end{enumerate}
\end{thm}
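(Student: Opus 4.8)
The plan is to obtain all four identities as immediate consequences of Corollary~\ref{cor-3.10}, after recording the fact that every factor occurring in the four products has quantum chromatic number equal to the Hoffman-type spectral bound $1+\lambda_{\max}/|\lambda_{\min}|$.

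First I would assemble the spectral data for the two relevant families. For $H_{4t,2t}=H_{4t}$, the analysis in Section~\ref{proof of Thm-1} shows $\lambda_{\max}=\binom{4t}{2t}$ and $\lambda_{\min}=-\binom{4t}{2t}/(4t-1)$, so by~(\ref{f-20}) and~(\ref{f-23}) we have $\chi_q(H_{4t,2t})=4t=1+\lambda_{\max}/|\lambda_{\min}|$. For $H_{4t-1,2t}$, Theorem~\ref{main-1} (equations~(\ref{f-27})--(\ref{f-30})) gives $\lambda_{\max}=\binom{4t-1}{2t}$ and $\lambda_{\min}=-\binom{4t-1}{2t}/(4t-1)$, hence again $\chi_q(H_{4t-1,2t})=4t=1+\lambda_{\max}/|\lambda_{\min}|$. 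Thus for every positive integer $m$ both $H_{4m,2m}$ and $H_{4m-1,2m}$ satisfy the hypothesis ``$\chi_q=1+\lambda_{\max}/|\lambda_{\min}|$'' required by Corollary~\ref{cor-3.10}, and in both cases this common value equals $4m$.

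Next I would invoke Corollary~\ref{cor-3.10}. In each of the four products the two factors are chosen from the list $\{H_{4m,2m},\,H_{4m-1,2m}:m\ge 1\}$, so both factors meet the hypothesis of Corollary~\ref{cor-3.10}; therefore $\chi_q(\Gamma_1\times\Gamma_2)=\min\{\chi_q(\Gamma_1),\chi_q(\Gamma_2)\}$. By the previous paragraph the first factor has quantum chromatic number $4t$ and the second has quantum chromatic number $4s$ in all four cases. Since $s\ge t$, we have $4s\ge 4t$, so the minimum equals $4t$, which is precisely the value claimed in (1)--(4).

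The proof is thus essentially bookkeeping: the substantive work — extracting $\lambda_{\max}$ and $\lambda_{\min}$ via Krawtchouk reciprocity and exhibiting matching explicit quantum colourings — has already been carried out in the proofs of Theorems~\ref{thm-1} and~\ref{main-1}, so I do not expect any real obstacle here. Should one wish to avoid quoting Corollary~\ref{cor-3.10} as a black box, the same estimates reprove it in this special case: Lemma~\ref{lem-3.8} gives $\chi_q(\Gamma_1\times\Gamma_2)\le\min\{4t,4s\}=4t$; and since the eigenvalues of $\Gamma_1\times\Gamma_2$ are the products $\lambda_i\mu_j$, one checks using $4s-1\ge 4t-1$ that $\lambda_{\max}(\Gamma_1\times\Gamma_2)=\lambda_{\max}(\Gamma_1)\lambda_{\max}(\Gamma_2)$ while $\lambda_{\min}(\Gamma_1\times\Gamma_2)=\lambda_{\min}(\Gamma_1)\lambda_{\max}(\Gamma_2)$, so Lemma~\ref{lem-2.4} yields $\chi_q(\Gamma_1\times\Gamma_2)\ge 1+(4t-1)=4t$, matching the upper bound.
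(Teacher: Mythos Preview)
Your proposal is correct and is exactly the argument the paper has in mind: the paper states Theorem~\ref{thm-3.11} immediately after Corollary~\ref{cor-3.10} with the one-line justification ``By Corollary~\ref{cor-3.10}, we get the following result for products of Hamming graphs,'' and you have simply written out the verification that each factor satisfies $\chi_q=1+\lambda_{\max}/|\lambda_{\min}|=4m$ (from Theorems~\ref{thm-1} and~\ref{main-1}) and then taken the minimum. Your optional direct route via Lemmas~\ref{lem-3.8} and~\ref{lem-2.4} is precisely the content of Corollary~\ref{cor-3.10}, so there is no substantive difference from the paper's approach.
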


\vskip 0.3 cm

{\bf References}

\section{Appendix}
For readers convenience, we provide some tables on the spectra of some Hamming graphs. Note that if $wt(a)=r$, we denote by $\rho_\ell^n(r)(=K_\ell^n(r))$ the eigenvalue of $H_{n,\ell}$ corresponding to $a$.

Table 1. $\rho_{\ell}^3(r)$

\begin{tabular}{|c|c|c|c|c|}
  \hline
  $n=3$ & $\ell=0$ & 1 & 2 & 3 \\
   \hline
 $r=0$ & 1 & 3 & 3 & 1 \\
  1 & 1 & 1 & -1 & -1 \\
  2 & 1 & -1 & -1 & 1 \\
  3 & 1 & -3 & 3 & -1 \\
  \hline
\end{tabular}

\vskip 0.2 cm
Table 2. $\rho_\ell^4(r)$

\begin{tabular}{|c|c|c|c|c|c|}
  \hline
  $n=4$ & $\ell=0$ & 1 & 2 & 3 & 4 \\
  \hline
  $r=0$ & 1 & 4 & 6 & 4 & 1 \\
  1 & 1 & 2 & 0 & -2 & -1 \\
  2 & 1 & 0 & -2 & 0 & 1 \\
  3 & 1 & -2 & 0 & 2 & -1 \\
  4 & 1 & -4 & 6 & -4 & 1 \\
  \hline
\end{tabular}

\vskip 0.2 cm
Table 3. $\rho_\ell^5(r)$

\begin{tabular}{|c|c|c|c|c|c|c|}
  \hline
 $n=5$ & $\ell=0$ & 1 & 2 & 3 & 4 & 5 \\
  \hline
 $r=0$ & 1 & 5 & 10 & 10 & 5 & 1 \\
  1 & 1 & 3 & 2 & -2 & -3 & -1 \\
  2 & 1 & 1 & -2 & -2 & 1 & 1 \\
  3 & 1 & -1 & -2 & 2 & 1 & -1 \\
  4 & 1 & -3 & 2 & 2 & -3 & 1 \\
  5 & 1 & -5 & 10 & -10 & 5 & -1 \\
  \hline
\end{tabular}

\vskip 0.2 cm
Table 4. $\rho_\ell^6(r)$

\begin{tabular}{|c|c|c|c|c|c|c|c|}
  \hline
 $n=6$ & $\ell=0$ & 1 & 2 & 3 & 4 & 5 &6\\
  \hline
 $r=0$ & 1 & 6 & 15 & 20 & 15 & 6 &1\\
  1 & 1 & 4 & 5 & 0 & -5 & -4& -1\\
  2 & 1 & 2 & -1 & -4 & -1 & 2 &1\\
  3 & 1 & 0 & -3 & 0 & 3 & 0 &-1\\
  4 & 1 & -2 & -1 & 4 & -1 & -2&1 \\
  5 & 1 & -4 & 5 & 0 & -5 & 4& -1\\
  6 & 1 & -6 & 15 & -20 & 15 & -6&1 \\
  \hline
\end{tabular}

\newpage
Table 5. $\rho_\ell^7(r)$

\begin{tabular}{|c|c|c|c|c|c|c|c|c|}
  \hline
 $n=7$ & $\ell=0$ & 1 & 2 & 3 & 4 & 5 &6&7\\
  \hline
 $r=0$ & 1 & 7 & 21 & 35 & 35 & 21 &7&1\\
  1 & 1 & 5 & 9 & 5 & -5 & -9& -5&-1\\
  2 & 1 & 3 & 1 & -5 & -5 & 1 &3&1\\
  3 & 1 & 1 & -3 & -3 & 3 & 3 &-1&-1\\
  4 & 1 & -1 & -3 & 3 & 3 & -3&-1& 1\\
  5 & 1 & -3 & 1 & 5 & -5 & -1& 3&-1\\
  6 & 1 & -5 & 9 & -5 & -5 & 9&-5 &1\\
  7 & 1 & -7 & 21 & -35 & 35 & -21&7 &-1\\
  \hline
\end{tabular}

\vskip 0.2 cm
Table 6. $\rho_\ell^8(r)$

\begin{tabular}{|c|c|c|c|c|c|c|c|c|c|}
  \hline
 $n=8$ & $\ell=0$ & 1 & 2 & 3 & 4 & 5 &6&7&8\\
  \hline
 $r=0$ & 1 & 8 & 28 & 56 &70 & 56 &28&8&1\\
  1 & 1 & 6 & 14 & 14 & 0 & -14& -14&-6&-1\\
  2 & 1 & 4 & 4 & -4 & -10 & -4 &4&4&1\\
  3 & 1 & 2 & -2 & -6 & 0 & 6 &2&-2&-1\\
  4 & 1 & 0 & -4 & 0 & 6 & 0&-4& 0&1\\
  5 & 1 & -2 & -2 & 6 & 0& -6& 2&2&-1\\
  6 & 1 & -4 & 4 & 4 & -10 & 4&4 &-4&1\\
  7 & 1 & -6 & 14 & -14 & 0 & 14&-14 &6&-1\\
  8 & 1 & -8 & 28 & -56 & -70 & -56&28 &-8&1\\
  \hline
\end{tabular}

\vskip 0.2 cm
Table 7. $\rho_\ell^9(r)$

\begin{tabular}{|c|c|c|c|c|c|c|c|c|c|c|}
  \hline
 $n=9$ & $\ell=0$ & 1 & 2 & 3 & 4 & 5 &6&7&8&9\\
  \hline
 $r=0$ & 1 & 9 & 36 & 84 &126 & 126 &84&36&9&1\\
  1 & 1 & 7 & 20 & 28 & 14 & -14& -28&-20&-7&-1\\
  2 & 1 & 5 & 8 & 0 & -14 & -14 &0&8&5&1\\
  3 & 1 & 3 & 0 & -8 & -6 & 6 &8&0&-3&-1\\
  4 & 1 & 1 & -4 & -4 & 6 & 6&-4& -4&1&1\\
  5 & 1 & -1 & -4 &4 &6 & -6& -4& 4&1&-1\\
  6 & 1 & -3 & 0 & 8 & -6 & -6&8 &0&-3&1\\
  7 & 1 & -5 & 8 & 0 & -14 & 14&0 &-8&5&-1\\
  8 & 1 & -7 & 20 & -28 & 14 & 14&-28 &20&-7&1\\
  9 & 1 & -9 & 36 & -84 & 126 & -126&84 &-36&9&-1\\
  \hline
\end{tabular}

\newpage
Table 8. $\rho_\ell^{10}(r)$

\begin{tabular}{|c|c|c|c|c|c|c|c|c|c|c|c|}
  \hline
 $n=10$ & $\ell=0$ & 1 & 2 & 3 & 4 & 5 &6&7&8&9&10\\
  \hline
 $r=0$ & 1 & 10 & 45 & 120 &210 & 252 &210&120&45&10&1\\
  1 & 1 & 8 & 27 & 48 & 42 & 0& -42&-48&-27&-8&-1\\
  2 & 1 & 6 & 13 & 8 & -14 & -28 &-14&8&13&6&1\\
  3 & 1 & 4 & 3 &-8& -14 & 0 & 14 &8&-3&-4&-1\\
  4 & 1 & 2 & -3 & -8 & 2 & 12&2& -8&-3&2&1\\
  5 & 1 & 0 & -5 &0 &10& 0& -10& 0&5&0&-1\\
  6 & 1 & -2 & -3 & 8 & 2 & -12&2 &8&-3&-2&1\\
  7 & 1 & -4 & 3 & 8 &-14& 0 & 14&-8 &-3&4&-1\\
  8 & 1 & -6 & 13 & -8 & -14 & 28&-14 &-8&13&-6&1\\
  9 & 1 & -8 & 27 & -48 & 42 & 0&-42 &48&-27&8&-1\\
  10 & 1 & -10 & 45 & -120 & 210 & -252&210 &-120&45&-10&1\\
  \hline
\end{tabular}

\vskip 0.2 cm
Table 9. $\rho_\ell^{11}(r)$

\begin{tabular}{|c|c|c|c|c|c|c|c|c|c|c|c|c|}
  \hline
 $n=11$ & $\ell=0$ & 1 & 2 & 3 & 4 & 5 &6&7&8&9&10&11\\
  \hline
 $r=0$ & 1 & 11 & 55 & 165 &330 & 462 &462&330&165&55&11&1\\
  1 & 1 & 9 & 35 & 75 & 90 & 42& -42&-90&-75&-35&-9&-1\\
  2 & 1 & 7 & 19 & 21 & -6 & -42 &-42&-6&21&19&7&1\\
  3 & 1 & 5 & 7 &-5& -22 & -14 & 14 &22&5&-7&-5&-1\\
  4 & 1 & 3 & -1 & -11 & -6 & 14&14& -6&-11&-1&3&1\\
  5 & 1 & 1 & -5 &-5 &10& 10& -10& -10&5&5&-1&-1\\
  6 & 1 & -1 & -5 & 5 & 10 & -10&-10 &10&5&-5&-1&1\\
  7 & 1 & -3 & -1 & 11 &-6& -14 & 14&6 &-11&1&3&-1\\
  8 & 1 & -5 & 7 & 5 & -22 & 14&14 &-22&5&7&-5&1\\
  9 & 1 & -7 & 19 & -21 & -6 & 42&-42 &6&21&-19&7&-1\\
  10 & 1 & -9 & 35 & -75 & 90 & -42&-42 &90&-75&35&-9&1\\
  11 & 1 & -11 & 55 & -165 & 330 & -462&462 &-330&165&-55&11&-1\\
  \hline
\end{tabular}

\vskip 0.2 cm
Table 10. $\rho_\ell^{12}(r)$

\begin{tabular}{|c|c|c|c|c|c|c|c|c|c|c|c|c|c|}
  \hline
 $n=12$ & $\ell=0$ & 1 & 2 & 3 & 4 & 5 &6&7&8&9&10&11 &12\\
  \hline
 $r=0$ & 1 & 12 & 66 & 220 &495 & 792 &924&792&495&220&66&12 &1\\
  1 & 1 & 10 & 44 & 110 & 165 & 132& 0&-132&-165&-110&-44&-10 &-1\\
  2 & 1 & 8 & 26 & 40 & 15 & -48 &-84&-48&15&40&26&8 &1\\
  3 & 1 & 6 & 12 &2& -27 & -36 & 0 &36&27&-2&-12&-6 &-1\\
  4 & 1 & 4 & 2 & -12 & -17 & 8&28& 8&-17&-12&2&4 &1\\
  5 & 1 & 2 & -4 &-10 &5& 20& 0& -20&-5&10&4& -2&-1\\
  6 & 1 &0 & -6 & 0 & 15 & 0&-20 &0&15&0&-6&0 &1\\
  7 & 1 & -2 & -4 & 10 &5& -20 & 0&20&-5 &-10&4&2& -1\\
  8 & 1 & -4 & 2 & 12 & -17 & -8&28 &-8&-17&12&2&-4 &1\\
  9 & 1 & -6 & 12 & -2 & -27 & 36&0 &-36&27&2&-12& 6&-1\\
  10 & 1 & -8 & 26 & -40 & 15 & 48&-84 &48&15&-40&26&-8 &1\\
  11 & 1 & -10 & 44 & -110 & 165 & -132&0 &132&-165&110&-44&10 &-1\\
  12 & 1 & -12 & 66 & -220 & 495 & -792&924 &-792&495&-220&66& -12&1\\
  \hline
\end{tabular}

\end{document}